\theoremstyle{plain}
\newtheorem{theorem}{Theorem}[section]
\newtheorem{lemma}[theorem]{Lemma}
\newtheorem{corollary}[theorem]{Corollary}
\newtheorem{question}[theorem]{Question}
\newtheorem*{stability}{Theorem~\ref{t:stability}}
\newtheorem*{meas}{Theorem~\ref{t:meas}}
\newtheorem*{Effros}{Theorem~\ref{t:Effros}}
\newtheorem*{openL}{Lemma~\ref{l:L}}
\newtheorem*{character}{Theorem~\ref{t:character}}
\newtheorem*{general}{Theorem~\ref{t:character general}}
\theoremstyle{definition}
\newtheorem{definition}[theorem]{Definition}
\newtheorem{example}[theorem]{Example}
\newtheorem{remark}[theorem]{Remark}
\numberwithin{equation}{section}
\newcommand{\R}{\mathbb{R}}
\newcommand{\N}{\mathbb{N}}
\providecommand{\restriction}{\restriction}
\newcommand{\iF}{\mathcal{F}}
\newcommand{\iK}{\mathcal{K}}
\DeclareMathOperator{\length}{length}
\DeclareMathOperator{\diam}{diam}
\DeclareMathOperator{\trees}{Tr}
\DeclareMathOperator{\illfounded}{IF}
\DeclareMathOperator{\finite}{Fin}
\DeclareMathOperator{\MLdim}{dim_{\it{ML}}}
\DeclareMathOperator{\dist}{dist}
\newcommand{\eps}{\varepsilon}
\begin{document}
\title{Stability and measurability of the modified lower dimension}

\author{Rich\'ard Balka}
\address{Alfr\'ed R\'enyi Institute of Mathematics, Re\'altanoda u.~13--15, H-1053 Budapest, Hungary}
\email{balka.richard@renyi.hu}

\author{M\'arton Elekes}
\address{Alfr\'ed R\'enyi Institute of Mathematics, Re\'altanoda u.~13--15, H-1053 Budapest, Hungary AND E\"otv\"os Lor\'and University, Institute of Mathematics, P\'azm\'any P\'eter s. 1/c, 1117 Budapest, Hungary}
\email{elekes.marton@renyi.hu}
\urladdr{http://www.renyi.hu/$\sim$emarci}

\author{Viktor Kiss}
\address{Alfr\'ed R\'enyi Institute of Mathematics, Re\'altanoda u.~13--15, H-1053 Budapest, Hungary}
\email{kiss.viktor@renyi.hu}

\thanks{The first author was supported by the MTA Premium Postdoctoral Research Program and the National Research, Development and Innovation Office -- NKFIH, grant no.~124749. The second author was supported by the National Research, Development and Innovation Office -- NKFIH, grants no.~124749 and 129211.  The third author was supported by the National Research, Development and Innovation Office -- NKFIH, grants no.~124749, 129211, and~128273.}

\subjclass[2010]{Primary 28A75, 28A20}

\keywords{modified lower dimension, finite stability, measurability, Baire class}

\begin{abstract} The lower dimension $\dim_L$ is the dual concept of the Assouad dimension. As it fails to be monotonic, Fraser and Yu introduced the modified lower dimension $\MLdim$ by making the lower dimension monotonic with the simple formula $\MLdim X=\sup\{\dim_L E: E\subset X\}$. 
	
As our first result we prove that the modified lower dimension is finitely stable in any metric space, answering a question of Fraser and Yu. 

We prove a new, simple characterization for the modified lower dimension. For a metric space $X$ let $\iK(X)$ denote the metric space of the non-empty compact subsets of $X$ endowed with the Hausdorff metric. As an application of our characterization, we show that the map $\MLdim \colon \iK(X)\to [0,\infty]$ is Borel measurable. More precisely, it is of Baire class $2$, but in general not of Baire class $1$. This answers another question of Fraser and Yu.


Finally, we prove that the modified lower dimension is not Borel measurable defined on the closed sets of $\ell^1$ endowed with the Effros Borel structure.  

\end{abstract}
\maketitle

\section{Introduction}

The concept of lower dimension was introduced by Larman~\cite{L} under the name minimal dimension. While the Assouad dimension helps to understand the `thickest' part of a set, the lower dimension identifies its `thinnest' part.  

\begin{definition} 
The \emph{lower dimension} of a metric space $X$ is defined as 
\begin{align*} 
\dim_L X = \sup\Big\{\alpha : \text{there is a } C > 0 \text{ such that for all } 0 < r < R \le \diam X 
\\ \text{ and for all } x  \in X \text{ we have } N_r(B(x, R)) \ge C \left(\frac{R}{r}\right)^\alpha\Bigg\},
\end{align*}
where $\diam$ denotes the diameter, $B(x,r)$ is the closed ball of radius $r$ centered at $x$, and $N_r(E)$ is the minimal number of sets of diameter at most $r$ required to cover $E$. We will adopt the convention $\dim_L \emptyset=0$.
\end{definition}
For more information on the lower dimension and applications the reader can consult Fraser's monograph \cite{F} and \cite{BG,HT,JV,KL,S,XYY}. It is easy to see that $\dim_L [0,1]=1$ and $E=[0,1]\cup\{2\}$ satisfies $\dim_L E=0$, so the lower dimension is neither monotone nor contains any information about the `thicker' part of $E$. Fraser and Yu \cite{FY} introduced the following notion which overcomes these shortcomings.  

\begin{definition}
We define the \emph{modified lower dimension} of a metric space $X$ as 
\begin{equation*} 
\MLdim X=\sup\{\dim_L E: E\subset X\}.
\end{equation*}
\end{definition}

Although the modified lower dimension is a new concept, it has already found several applications. First, the modified lower dimension can be used to obtain the best known lower bound on the Assouad dimension of product sets, see \cite[Proposition~4.6]{FY}. Second, the modified lower dimension yields optimal lower bounds in certain intersection theorems, where one considers the intersection of a general set with the set of so-called badly approximable numbers, see \cite[Theorem~14.2.1]{F} and the paragraph preceding \cite[Corollary~14.2.3]{F}. The common theme in both applications is to obtain stronger results for sets whose modified lower dimension is strictly larger than their lower dimension. A well-known family of such sets is the so-called Bedford-McMullen carpets, which have equal Hausdorff and modified lower dimension, while their lower dimension is strictly smaller in general, see \cite[Section~8.3]{F}. 

\bigskip

The first problem  we are dealing with in the present paper concerns stability.
Fraser and Yu \cite[Proposition~4.3]{FY} proved finite stability for properly separated sets, and Fraser \cite[Lemma~3.4.10]{F} showed it for closed sets.

\begin{theorem}[Fraser--Yu, Fraser] Let $(X,\rho)$ be a metric space and let $E,F\subset X$. We have $\MLdim (E\cup F)=\max\{\MLdim E, \, \MLdim F\}$ if either 
\begin{enumerate}
\item $\dist(E,F)=\inf\{ \rho(x,y): x\in E, y\in F\}>0$, or 
\item $X=\R^d$ and $E,F$ are closed. 
\end{enumerate}	
\end{theorem}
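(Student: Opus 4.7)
The direction $\MLdim(E\cup F)\ge\max\{\MLdim E,\MLdim F\}$ is immediate from the monotone definition of $\MLdim$, so only the reverse inequality needs work. Any subset $G\subset E\cup F$ splits as $G_E\cup G_F$ with $G_E:=G\cap E\subset E$ and $G_F:=G\setminus E\subset F$; since $\dim_L G_E\le\MLdim E$ and $\dim_L G_F\le\MLdim F$, it suffices to establish the sharpened statement
\[
\dim_L(G_E\cup G_F)\le\max\{\dim_L G_E,\,\dim_L G_F\}
\]
under the hypothesis inherited from $(E,F)$. The argument will hinge on a \emph{small-scale localisation} of $\dim_L$: the condition $\dim_L X\ge\alpha$ is equivalent to the existence of $C,\rho_0>0$ with $N_r(B(x,R))\ge C(R/r)^\alpha$ for all $0<r<R\le\rho_0$ and all $x\in X$. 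The nontrivial direction follows because for $R>\rho_0$ the containment $B(x,R)\supseteq B(x,\rho_0)$ yields $N_r(B(x,R))\ge C(\rho_0/r)^\alpha$, and the lost factor $(R/\rho_0)^\alpha\le(\diam X/\rho_0)^\alpha$ can be absorbed into a smaller constant.

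\textbf{Case (1).} Put $d:=\dist(E,F)>0$, so also $\dist(G_E,G_F)\ge d$. Given $\alpha<\dim_L(G_E\cup G_F)$ with witnessing constant $C>0$, fix any $x\in G_E$ (else $G_E=\emptyset$ and the claim is trivial). For every $R<d$ all points of $G_E\cup G_F$ within distance $R$ of $x$ lie in $G_E$, hence $B_{G_E\cup G_F}(x,R)=B_{G_E}(x,R)$, and the inequality $N_r(B_{G_E}(x,R))\ge C(R/r)^\alpha$ holds throughout $0<r<R<d$. The localisation principle then delivers $\dim_L G_E\ge\alpha$, and letting $\alpha\uparrow\dim_L(G_E\cup G_F)$ completes this case.

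\textbf{Case (2).} Here $E,F\subset\R^d$ are closed but may intersect, so the uniform separation used above is lost. My plan is to decompose via thickenings $A_n:=\{x\in G_E:\dist(x,F)\ge 1/n\}$ and $B_n:=\{x\in G_F:\dist(x,E)\ge 1/n\}$, whose increasing union exhausts $G_E\cup G_F$ outside the overlap piece $(G_E\cap F)\cup(G_F\cap E)$. At points of $A_n$ and scales $R<1/n$, balls in $G_E\cup G_F$ coincide with balls in $G_E$, replicating the Case (1) mechanism. The main obstacle, and the reason this case is confined to $\R^d$ with closed sets, is to argue that high $\dim_L$ for $G_E\cup G_F$ is inherited by some $A_n$ or $B_n$ rather than leaking entirely into the overlap. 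I would tackle this by first using closedness of $E,F$ and local compactness of $\R^d$ to restrict to a bounded piece, and then running a pigeonhole on a Whitney-type decomposition of $\R^d\setminus(E\cap F)$: in any $N_r$-cover of a ball $B_{G_E\cup G_F}(x,R)$ a definite proportion of the covering sets must sit at positive distance from $F$, so a uniform thinness bound transfers to $A_n$ for a sufficiently large $n$. The delicate bookkeeping at scales $R\approx\dist(x,F)$, where thickening artefacts dominate and the quantitative constants deteriorate, is the step I expect to be the principal technical difficulty.
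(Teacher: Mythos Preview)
This theorem is not proved in the paper: it is quoted from Fraser--Yu and Fraser as prior work, and the paper instead establishes the unconditional statement (Theorem~\ref{t:stability}) by a completely different route. The paper's argument never splits $G$ as $G_E\cup G_F$ and never tries to bound $\dim_L G$ by $\max\{\dim_L G_E,\dim_L G_F\}$. Instead it proves a structural lemma (Lemma~\ref{l:L}): every non-empty relatively open subset $U$ of a metric space $Y$ contains an open $V\subset U$ with $\dim_L V\ge\dim_L Y$. The stability theorem then follows from a one-line dichotomy: either $E$ is dense in $E\cup F$, in which case closure-stability of $\dim_L$ gives $\MLdim E\ge\dim_L(E\cup F)$; or there is a relatively open piece of $E\cup F$ lying entirely in $F$, and Lemma~\ref{l:L} pushes the lower dimension into $F$. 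Neither hypothesis (1) nor (2) is needed.

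Your Case~(1) is essentially correct and is the standard argument. One small gap: your ``small-scale localisation'' principle, as you justify it, uses the factor $(\diam X/\rho_0)^\alpha$ to absorb large $R$, which requires $\diam G_E<\infty$. When $E$ (hence $G_E$) is unbounded this step fails as written. The fix is easy---restrict at the outset to bounded $G$, which suffices for $\MLdim$ by the paper's Corollary~\ref{c:ML}---but it should be said.

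Your Case~(2) is not a proof but a programme, and the programme has a real obstacle that you yourself flag. The sharpened target $\dim_L(G_E\cup G_F)\le\max\{\dim_L G_E,\dim_L G_F\}$ is genuinely stronger than what is needed, and your thickenings $A_n,B_n$ do not exhaust $G$: the overlap $G\cap E\cap F$ is missing, and nothing in your outline prevents the thinness witnessing low $\dim_L G_E$ from occurring exactly on $E\cap F$ while the thinness of $G_F$ occurs elsewhere. The ``pigeonhole on a Whitney-type decomposition'' is not an argument---it is a hope that the constants at scales $R\approx\dist(x,F)$ can be controlled uniformly in $n$, and you give no mechanism for this. By contrast, the paper's open-set lemma sidesteps the entire difficulty: rather than comparing $\dim_L$ of the two pieces, it manufactures a single subset of one piece that already captures the full lower dimension of the union.
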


Fraser and Yu \cite[Question~9.3]{FY} asked the following, see also \cite[Question~17.1.1]{F}.

\begin{question}[Fraser--Yu] 
Let $X$ be a metric space and $E,F\subset X$, is it true that 
\begin{equation*}
\MLdim (E\cup F)=\max\{\MLdim E, \, \MLdim F\}?
\end{equation*}
\end{question}

In Section~\ref{s:stability} we answer the above question in the affirmative. 

\begin{stability}
Let $X$ be a metric space and let $E,F\subset X$. Then
\begin{equation*}
\MLdim (E\cup F)=\max\{\MLdim E, \, \MLdim F\}.
\end{equation*}
\end{stability}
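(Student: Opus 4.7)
The easy direction $\MLdim(E \cup F) \geq \max\{\MLdim E, \MLdim F\}$ is immediate from the monotonicity of $\MLdim$ built into its definition as a supremum over subsets. For the reverse direction, it suffices to show that for every $B \subseteq E \cup F$ and every $\alpha < \dim_L B$ one can find a subset of $B \cap E$ or of $B \cap F$ whose lower dimension is at least $\alpha - \eps$ for any fixed $\eps > 0$; taking the supremum of $\dim_L B$ over $B \subseteq E \cup F$ and letting $\eps \to 0$ then yields $\MLdim(E \cup F) \leq \max\{\MLdim E, \MLdim F\}$. Fix a constant $C > 0$ witnessing $\dim_L B \geq \alpha$, so that $N_r(B \cap B(x, R)) \geq C (R/r)^{\alpha}$ for every $x \in B$ and $0 < r < R \leq \diam B$.

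Subadditivity of $N_r$ applied to the decomposition $B \cap B(x, R) = (E \cap B \cap B(x, R)) \cup (F \cap B \cap B(x, R))$ yields, for each triple $(x, r, R)$, the local dichotomy that at least one of $N_r(E \cap B \cap B(x, R)) \geq \tfrac{C}{2}(R/r)^{\alpha}$ or $N_r(F \cap B \cap B(x, R)) \geq \tfrac{C}{2}(R/r)^{\alpha}$ holds. The plan is to convert this scale-by-scale dichotomy into a uniform subset of full lower dimension by building a Cantor-like tree in $B$ along a geometric sequence of scales $R_n = R_0 k^{-n}$, where $k$ is a large parameter tuned in terms of $\eps$. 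At each node I would extract a maximal $R_{n+1}$-separated set of cardinality at least $C k^{\alpha}$ within the ball of radius $R_n/3$ around the node (packing and covering numbers differ by a bounded factor, and the shrinking by $1/3$ guarantees global separation of selected points across different parents at the same level), and then invoke the local dichotomy to keep only those centres belonging to the dominant colour class. A routine branching estimate shows that the resulting tree's leaf set has lower dimension at least $\alpha - O(1/\log k)$.

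The main obstacle is the \emph{colour commitment}: the locally winning colour may flip from node to node, so the tree produced by naively following the local dichotomy is mixed rather than monochromatic. I would handle this either by a Ramsey-type pigeonhole on the infinite tree --- since there are only two colours, a sub-tree of comparable branching on which a single colour always wins must exist with only a bounded multiplicative loss --- or, alternatively, by running two constructions in parallel and observing that at each node the colour ``lost'' to one tree contributes its branching to the other, so that at least one of the two candidate monochromatic trees retains enough branching overall to yield a subset of lower dimension at least $\alpha - \eps$ once $k$ is taken sufficiently large. Either route produces a subset of $E$ or of $F$ of lower dimension at least $\alpha - \eps$, which finishes the proof after letting $\eps \to 0$.
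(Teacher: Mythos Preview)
Your proposal has a genuine gap at the ``colour commitment'' step, and neither of your two suggested fixes works.

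Consider the model situation in which the tree you build is a $(k,l)$-regular set $\{x_s\}_{s\in l^{<\omega}}$, with $E$ consisting of exactly the points $x_s$ with $\length(s)$ even and $F$ of those with $\length(s)$ odd. Then at every node the dominant colour among its children is the opposite of the colour of the node itself. There is no monochromatic subtree with branching $\ge 2$ at all, so the Ramsey-type pigeonhole you invoke simply fails---there is no ``bounded multiplicative loss'' version that survives. The parallel construction does no better: the $E$-tree dies at the first level (all children of the root lie in $F$), and the $F$-tree dies at the second; in general both monochromatic trees can collapse after finitely many levels.

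In this particular example the set of even-level nodes does have the full lower dimension, but seeing this requires an argument you never supply: it uses that $\dim_L$ is stable under taking closures and that the even-level nodes are dense in the whole tree. Once you bring in closure stability, the tree construction becomes superfluous, because the same dichotomy works directly on any $B\subset E\cup F$.

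This is exactly how the paper proceeds. The key lemma (Lemma~\ref{l:L}) shows that every non-empty open subset $U$ of a metric space $Y$ contains an open $V\subset U$ with $\dim_L V\ge \dim_L Y$; in particular $\MLdim U\ge \dim_L Y$. The theorem is then a two-line density dichotomy. After shrinking, assume $\dim_L(E\cup F)>\max\{\MLdim E,\MLdim F\}$. If $E$ is dense in $E\cup F$, closure stability gives $\dim_L E=\dim_L\overline{E}=\dim_L\overline{E\cup F}=\dim_L(E\cup F)$, a contradiction. Otherwise some non-empty relatively open subset of $E\cup F$ lies entirely in $F$, and the lemma yields $\MLdim F\ge \dim_L(E\cup F)$, again a contradiction. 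No tree, no colouring, no $\eps$-loss.
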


The main ingredient of the proof is the next lemma about the lower dimension, which might be interesting in its own right. 

\begin{openL} 
Let $(X,\rho)$ be a metric space and let $U\subset X$ be a non-empty open set. Then there exists an open set $V\subset U$ such that $\dim_L V\geq \dim_L X$. 
\end{openL}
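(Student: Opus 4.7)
The natural candidate for $V$ is a small open ball. Fix $\alpha<\dim_L X$ with witnessing constant $C>0$ from the definition, pick any $x_0\in U$ and $\rho_0>0$ with $B(x_0,\rho_0)\subset U$, and propose $V:=B^\circ(x_0,\rho_0)$. This is a non-empty open subset of $U$ with diameter at most $2\rho_0$. The task then reduces to producing a uniform constant $C'>0$ such that
\begin{equation*}
N_r(B(y,R)\cap V)\geq C'\,(R/r)^\alpha\quad\text{for all } y\in V,\ 0<r<R\leq\diam V.
\end{equation*}

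The core strategy is to locate, inside $B(y,R)\cap V$, an auxiliary ball of radius comparable to $R$ to which the hypothesis $\dim_L X\geq\alpha$ can be applied. Let $d=\rho(y,x_0)$, so that $d_y:=\rho_0-d$ measures the depth of $y$ in $V$. When $R\leq d_y$, the whole ball $B(y,R)$ sits in $V$ and the hypothesis on $X$ gives the bound directly. When $d\leq R/2$, the concentric ball $B(x_0,\min(R/2,\rho_0))$ fits inside $B(y,R)\cap V$, and again the $\dim_L X$-estimate yields a bound of the form $C\cdot 2^{-\alpha}(R/r)^\alpha$. When $d>R/2$ but $d\leq R$ (so $x_0\in B(y,R)$), one can still place a large auxiliary ball at $x_0$, of radius $\min(R-d,\rho_0)$, inside $B(y,R)\cap V$.

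The delicate regime is $\rho_0-d<R<d$: here $y$ lies in the outer half of $V$ (so $d>\rho_0/2$), the ball $B(y,R)$ sticks out of $V$, yet $R$ is too small to reach $x_0$, so neither $y$ nor $x_0$ provides an $\Omega(R)$-deep interior point of $V$ inside $B(y,R)$. This is the main obstacle. The plan to handle it is to extract such an interior point by a counting argument: the global hypothesis $N_r(B(y,R))\geq C(R/r)^\alpha$ places a definite amount of mass in $B(y,R)$ at every small scale $r$, while any failure of the conclusion would force most of this mass to concentrate in the thin shell near $\partial V$; iterating the hypothesis on smaller balls in that shell should contradict the uniform $\dim_L X$-bound. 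Concretely, I expect to extract a point $y'\in B(y,R/2)\cap V$ with $\rho(y',x_0)\leq \rho_0-cR$ for some universal $c>0$, and then apply the $\dim_L X$-estimate to $B(y',cR)\subset B(y,R)\cap V$. If this extraction proves too rigid in full generality, a backup route is a contradiction argument: assume no open $V\subset U$ satisfies the conclusion, produce a nested sequence of shrinking open sets witnessing dimension failures at increasingly small scales, and combine them to violate the uniform $\dim_L X$-estimate on $X$.
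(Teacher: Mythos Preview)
Your proposal leaves a genuine gap in the delicate regime $\rho_0-d<R<d$. The counting sketch does not close it: the hypothesis $N_r(B(y,R))\geq C(R/r)^\alpha$ counts points of $X$, not of $V$, and nothing prevents that $r$-mass from lying almost entirely \emph{outside} $V$ (at distance $\geq\rho_0$ from $x_0$) rather than in the thin inner shell. No contradiction arises from ``iterating on smaller balls in the shell'', because those smaller balls may again be rich only by virtue of what they contain outside $V$. Likewise, the existence of a point $y'\in B(y,R/2)\cap V$ at depth $\geq cR$ is precisely what is in doubt: in a general metric space the annulus $\{z:\rho_0-cR<\rho(z,x_0)<\rho_0\}$ may be nearly empty near $y$, so that $B(y,R/2)\cap V$ contains only very shallow points while $B(y,R)$ is rich because it reaches beyond $V$. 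The backup contradiction scheme is not developed enough to evaluate.

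The paper sidesteps this by \emph{not} taking $V$ to be a ball. It fixes $y_0\in U$ and $0<\eps<1/2$, sets $V_1=U(y_0,\eps)$ (open ball), $V_{n+1}=U(V_n,\eps^{n+1})$, and puts $V=\bigcup_n V_n\subset U$. The payoff is structural: any $y\in V_k\setminus V_{k-1}$ comes with a chain $y_0,y_1,\dots,y_k=y$, $y_n\in V_n$, $\rho(y_{n-1},y_n)<\eps^n$. Given $R$ with $3\eps^{m+1}\leq R<3\eps^m$, one takes $z=y_m$ (or $z=y$ if $m\geq k$); then $U(z,(\eps/3)R)\subset U(y_m,\eps^{m+1})\subset V_{m+1}\subset V$, while $\rho(y,y_m)<\sum_{n>m}\eps^n<2\eps^{m+1}$ gives $U(z,(\eps/3)R)\subset U(y,R)$. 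Hence every ball in $V$ contains a sub-ball of comparable radius lying entirely in $V$, and the $\dim_L X$ estimate transfers directly. The construction manufactures exactly the inward chain that a bare open ball need not supply.
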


This immediately implies that the supremum in the definition of modified lower dimension can be taken for bounded sets only.

\begin{corollary}\label{c:ML}
For any metric space $X$ we have 
\begin{equation*}
\MLdim X=\sup\{ \dim_L E: E\subset X \text{ is bounded}\}.
\end{equation*}
\end{corollary}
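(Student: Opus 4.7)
The inequality $\sup\{\dim_L E : E\subset X \text{ bounded}\} \le \MLdim X$ is immediate from the definition, so the only content is the reverse inequality. My plan is to apply the Open Lemma to each subset $E \subset X$, viewed as a metric space in its own right, in order to replace $E$ by a bounded subset without decreasing the lower dimension.

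More precisely, I would proceed as follows. Fix an arbitrary $E\subset X$; we may assume $E$ is non-empty, since $\dim_L \emptyset = 0$ and $\emptyset$ is bounded. Pick a point $x_0\in E$ and set
\begin{equation*}
U = \{y\in E : \rho(x_0,y) < 1\},
\end{equation*}
which is a non-empty open subset of the metric subspace $E$ and is bounded in $X$. Applying the Open Lemma to the metric space $E$ and the open set $U\subset E$, we obtain a non-empty set $V\subset U$, open in $E$, such that
\begin{equation*}
\dim_L V \ge \dim_L E.
\end{equation*}
Since $V\subset U$ and $U$ is bounded in $X$, $V$ is a bounded subset of $X$. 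Therefore
\begin{equation*}
\sup\{\dim_L F : F\subset X \text{ bounded}\} \ge \dim_L V \ge \dim_L E,
\end{equation*}
and taking the supremum over all $E\subset X$ yields $\sup\{\dim_L F : F\subset X \text{ bounded}\} \ge \MLdim X$.

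There is essentially no obstacle here beyond correctly invoking the Open Lemma on the subspace $E$ rather than on $X$ itself; the rest is bookkeeping. The only mildly delicate point is noticing that boundedness of $V$ as a subset of $X$ follows from boundedness of the open ball $U$ chosen inside $E$, which is why the argument does not require any assumption on $X$ (such as local boundedness or separability).
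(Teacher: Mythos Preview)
Your argument is correct and is exactly the approach the paper intends: the paper states that the corollary follows ``immediately'' from Lemma~\ref{l:L}, and you have simply spelled out the details, applying the lemma to the subspace $E$ with $U$ a bounded open ball in $E$. There is nothing to add.
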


\begin{remark}
Some authors define the lower dimension by replacing $R\leq \diam X$ with $R\leq 1$ in the definition. This new dimension remains the same for bounded sets, but can be larger in general. Lemma~\ref{l:L} and Corollary~\ref{c:ML} hold for this notion with the same proof, so the derived notions of modified lower dimension coincide.
\end{remark}

In Section~\ref{s:character} we prove two simple characterization theorems for the modified lower dimension. 
Besides interesting in their own rights, they will play a crucial role in the proof of Theorem~\ref{t:meas} as well. Recall that a natural  number $l$ is identified with the set $\{0, 1, \dots, l-1\}$, the symbol $l^n$ stands for the set of functions from $n$ into $l$, and $l^{<\omega} = \cup_{n\in \N} l^n$ is the set of finite sequences in $l$. For $s,t\in l^{<\omega}$ let 
$s^\frown t$ denote the concatenation of $s$ and $t$.

\begin{definition}\label{d:k,l regularity} 
Let $(X,\rho)$ be a metric space and let $k,l \ge 2$ be integers. Assume that $y_s\in X$ for all ${s \in l^{<\omega}}$. We say that $\{y_s\}_{s \in l^{< \omega}}$ is a \emph{$(k,l)$-regular set} if 
\begin{enumerate}[(i)]
\item \label{i:kl1} $\rho(y_s,y_t) \leq 2^{-kn-1}$ if $s\in l^n$, $t\in l^{n+1}$ and $t$ extends $s$, 
\item \label{i:kl2} $\rho(y_s, y_t)\geq 2^{-kn+2}$ if $s, t \in l^{n}$ and $s \neq t$.
\end{enumerate}
We say that the $(k, l)$-regular set $\{y_s\}_{s \in l^{< \omega}}$ is a \emph{strongly $(k,l)$-regular set} if in addition to conditions \eqref{i:kl1} and \eqref{i:kl2} it satisfies
\begin{enumerate}[(i)]
\setcounter{enumi}{2}
\item \label{i:kl3} $y_{s^\frown 0} = y_s$ for all $s \in l^{<\omega}$.
\end{enumerate}
\end{definition}

We obtain the following useful characterization of the modified lower dimension. 

\begin{general}
Let $(X,\rho)$ be a metric space and let $\alpha\geq 0$. The following statements are equivalent: 
\begin{enumerate}
\item $\MLdim X>\alpha$; 
\item $X$ contains a strongly $(k,l)$-regular set with $ \frac{\log l}{k \log 2}>\alpha$.
\end{enumerate}
\end{general}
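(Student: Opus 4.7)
The plan is to prove the two implications separately; the substantive direction is $(1) \Rightarrow (2)$, while $(2) \Rightarrow (1)$ follows from a self-similarity computation. For $(2) \Rightarrow (1)$, I would set $E := \{y_s : s \in l^{<\omega}\} \subset X$ and show $\dim_L E \geq \gamma := \log l / (k \log 2)$, which yields $\MLdim X \geq \dim_L E > \alpha$. Iterating (iii) gives $y_s = y_{s^\frown 0^j}$ for every $j \geq 0$, so each point of $E$ lives at arbitrarily high levels of the tree. Fixing $x = y_{s_0} \in E$ and any $j, m \geq 0$, the $l^m$ indices $\tau \in l^{|s_0|+j+m}$ that extend $s_0^\frown 0^j$ yield $l^m$ points $y_\tau$ which are pairwise at distance $\geq 2^{-k(|s_0|+j+m)+2}$ by (ii) (hence distinct), and all contained in $B_E(x, 2^{-k(|s_0|+j)})$ by telescoping (i) along the branch (using $\sum_{i\geq 0} 2^{-ki} \leq 2$ for $k \geq 2$). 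Given $0 < r < R \leq \diam E$, one picks $j$ minimal with $2^{-k(|s_0|+j)} \leq R$ and $m$ maximal with $2^{-k(|s_0|+j+m)+2} \geq r$; this yields $km \geq \log_2(R/r) - (k-2)$ and hence $N_r(B_E(x,R)) \geq l^m \geq 2^{-(k-2)\gamma}(R/r)^\gamma$ (the edge case of no valid $m \geq 0$ forces $R/r < 2^{k-2}$ and is absorbed into the constant via the trivial bound $N_r \geq 1$).

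For $(1) \Rightarrow (2)$, I would pick $E \subset X$ with $\dim_L E > \alpha$ and reals $\alpha < \gamma < \beta < \dim_L E$, and let $C > 0$ be the constant in the definition of lower dimension for exponent $\beta$; standard packing-covering duality $P_r \geq N_{2r}$ then gives $P_r(B_E(x, R)) \geq C(R/(2r))^\beta$ for $x \in E$ and $0 < r < R \leq \diam E$. Take $k$ large (constraints pinned below) and set $l := \lceil 2^{k\gamma} \rceil$, so $\log l / (k \log 2) > \alpha$ and $l \geq 2$. I then build $\{y_s\}$ inductively, starting from any $y_\emptyset \in E$: given $y_s$ at level $n$, the task is to find $l$ children in $B_E(y_s, R_n)$ at mutual distance $\geq r_n$ with $y_{s^\frown 0} = y_s$, where $R_n := 2^{-kn-1}$ and $r_n := 2^{-k(n+1)+2}$.

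The main obstacle is the compatibility of (iii) with (ii): a raw packing in $B_E(y_s, R_n)$ need neither contain $y_s$ nor avoid the inner ball $B_E(y_s, r_n)$, and in a general metric space there is no useful upper bound on $P_{r_n}(B_E(y_s, r_n))$. I resolve this with a \emph{doubling trick}: apply the packing estimate at the enlarged separation $2 r_n$ to obtain, for $k$ large, at least $l$ points in $B_E(y_s, R_n)$ pairwise at distance $\geq 2r_n$. By the triangle inequality at most one of these points can lie within $r_n$ of $y_s$; discarding it leaves $l - 1$ points outside $B_E(y_s, r_n)$ still pairwise $2r_n$-separated, which I declare to be $y_{s^\frown 1}, \dots, y_{s^\frown (l-1)}$, together with $y_{s^\frown 0} := y_s$. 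This enforces (i), (iii), and (ii) for siblings; for cousins (children of distinct level-$n$ parents $y_s, y_{s'}$) condition (ii) follows by triangle inequality from $\rho(y_s, y_{s'}) \geq 2^{-kn+2}$, since $3 \cdot 2^{-kn} \geq r_n$ when $k \geq 2$. The one remaining nuisance is the base level $n = 0$ when $R_0 = 1/2$ exceeds $\diam E$: there one applies the lower-dimension estimate with $R = \diam E$ instead, gaining a $(\diam E)^\beta$ in the constant but still producing $\geq l$ separated points once $k$ is large enough, as all the constraints on $k$ amount to $k(\beta - \gamma)$ exceeding an absolute constant.
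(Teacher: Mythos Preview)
Your argument for $(2)\Rightarrow(1)$ has a genuine gap. You only move \emph{down} the tree from $s_0$ via $s_0^\frown 0^j$, relying on property~(iii). This is fine when $R \le 2^{-k|s_0|}$, because then the minimality of $j\ge 1$ gives $R < 2^{-k(|s_0|+j-1)}$ and your inequality $km \ge \log_2(R/r) - O(k)$ follows. But when $R > 2^{-k|s_0|}$ you are forced to take $j=0$, and minimality now gives no upper bound on $R$ beyond $R \le \diam E \le 2$. In that regime your count yields only
\[
l^m \;\ge\; 2^{-\gamma(k|s_0|+O(k))}\,(R/r)^{\gamma},
\]
so the implied constant depends on $|s_0|$ and hence on $x$. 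Concretely, for $x=y_{(1,\dots,1)}$ at level $N$ and $R\approx 1$ you find only the $l^m$ descendants of $(1,\dots,1)$, whereas $(R/r)^\gamma$ is of order $l^{N+m}$. Your edge-case remark about ``no valid $m\ge 0$'' has the same defect when $j=0$. The fix, which is exactly what the paper does, is to also go \emph{up}: when $\length(s_0)\ge n$ (with $n$ determined by $R$ via $2^{-kn+1}<R\le 2^{-k(n-1)+1}$), take $s=s_0\restriction n$; then $\rho(y_s,x)<2^{-kn}$ by telescoping (i), and the $l^m$ descendants of $s$ at level $n+m$ all lie in $B(x,2^{-kn+1})\subset B(x,R)$.

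For $(1)\Rightarrow(2)$ your doubling trick is correct (up to the harmless issue that two packing points could sit at distance exactly $r_n$ from $y_s$; discard both), but it is more work than needed. The paper simply takes a \emph{maximal} $2^{-k(n+1)+2}$-separated subset $Z$ of $B_E(y_s,2^{-kn-1})$ \emph{that contains $y_s$}; maximality makes the balls $B(z,2^{-k(n+1)+2})$ cover $B_E(y_s,2^{-kn-1})$, whence $\#Z \ge N_{2^{-k(n+1)+3}}(B_E(y_s,2^{-kn-1})) \ge C\,2^{(k-4)\beta} \ge l$. Condition~(iii) is then automatic with no need to discard points or double the separation.
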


The word `strongly' cannot be removed from the above theorem even in separable metric spaces, see Example~\ref{ex}. However, for complete metric spaces this simpler characterization holds.

\begin{character}
Let $(X,\rho)$ be a complete metric space and let $\alpha\geq 0$. The following statements are equivalent: 
\begin{enumerate}
\item  $\MLdim X>\alpha$; 
\item $X$ contains a $(k,l)$-regular set with $ \frac{\log l}{k \log 2}>\alpha$.
\end{enumerate}
\end{character}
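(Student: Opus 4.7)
The forward direction $(1) \Rightarrow (2)$ is immediate from Theorem~\ref{t:character general}, because every strongly $(k,l)$-regular set is in particular $(k,l)$-regular. All the work lies in the converse $(2) \Rightarrow (1)$, where completeness of $X$ enters the picture. Since Example~\ref{ex} rules out $(2) \Rightarrow (1)$ in the non-complete setting, any viable argument must invoke completeness in an essential way.

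Suppose $\{y_s\}_{s \in l^{<\omega}} \subseteq X$ is a $(k,l)$-regular set with $\beta := \frac{\log l}{k \log 2} > \alpha$. Condition~\eqref{i:kl1} makes $(y_{\sigma \restriction n})_{n \in \N}$ a Cauchy sequence for each $\sigma \in l^\omega$, so by completeness we can define the limit points $y_\sigma = \lim_{n \to \infty} y_{\sigma \restriction n} \in X$ and consider
\[
E = \{y_\sigma : \sigma \in l^\omega\} \subseteq X.
\]
My plan is to show directly that $\dim_L E \ge \beta$, which at once gives $\MLdim X \ge \dim_L E > \alpha$.

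The main technical lemma I would establish is a two-sided estimate
\[
c_1 \cdot 2^{-kj} \le \rho(y_\sigma, y_\tau) \le c_2 \cdot 2^{-kj},
\]
valid whenever $\sigma, \tau \in l^\omega$ first disagree at level $j$, with $c_1, c_2 > 0$ depending only on $k$. The upper bound follows by telescoping \eqref{i:kl1} along each of the two tails starting at $y_{\sigma\restriction j-1} = y_{\tau\restriction j-1}$ and summing the resulting geometric series. The lower bound starts from \eqref{i:kl2} applied to $y_{\sigma\restriction j}, y_{\tau \restriction j}$ and subtracts the same tail bounds; the positivity is a comfortable margin since $2^{-kj+2}$ dominates $2 \cdot 2^{-kj-1}/(1-2^{-k})$ as soon as $k \ge 2$.

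Granted this estimate, fix $y = y_\sigma \in E$ and $0 < r < R \le \diam E$. Choose integers $n, m$ with $c_2 \cdot 2^{-kn} \le R < c_2 \cdot 2^{-k(n-1)}$ and $c_1 \cdot 2^{-km} > r \ge c_1 \cdot 2^{-k(m+1)}$; the degenerate case $m \le n$ is handled trivially since $y \in B(y,R) \cap E$. Every $y_\tau$ with $\tau \restriction n = \sigma \restriction n$ sits in $B(y,R) \cap E$ by the upper bound, and any two such branches whose $m$-th prefixes differ are at distance exceeding $r$ by the lower bound, so they cannot share a common set of diameter $\le r$. Since there are $l^{m-n}$ extensions of $\sigma \restriction n$ to prefixes of length $m$, we obtain
\[
N_r(B(y,R) \cap E) \ge l^{m-n} \ge C^{-\beta}\left(\frac{R}{r}\right)^{\beta},
\]
where $C$ absorbs the bookkeeping constants from $R/r \le C \cdot 2^{k(m-n)}$. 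The real obstacle is not conceptual but arithmetic: one has to marshal $c_1, c_2, C$ so that this bound holds uniformly across all admissible $(y_\sigma, r, R)$, rather than only in an asymptotic regime.
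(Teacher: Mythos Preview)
Your argument is correct and follows essentially the same route as the paper: the paper packages the limit set as $K = \bigcap_{n} \bigcup_{s \in l^n} B(y_s, 2^{-kn})$ rather than your $E = \{y_\sigma : \sigma \in l^\omega\}$, but these coincide, and the separation/inclusion bookkeeping and the resulting covering count $N_r \ge l^m$ are the same. The paper's arithmetic is slightly tidier (explicit thresholds $2^{-kn+1}$ and the fixed constant $C = l^{-2}$), but the content is identical.
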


In Section~\ref{s:meas} we consider the measurability of the modified lower dimension. Recall that $\iK(X)$ is the space of non-empty compact sets of $X$ endowed with the Hausdorff metric, see Definition~\ref{d:H}. Fraser and Yu \cite[Question~9.3]{FY} asked the next question, see also \cite[Question~17.1.1]{F}.

\begin{question}[Fraser--Yu] Let $X$ be a compact metric space. Is the mapping $\MLdim \colon \iK(X)\to [0,\infty]$ Borel measurable, and, if so, which Baire classes does it belong to?
\end{question} 

We answer the above question based on Theorem~\ref{t:character}. 

\begin{meas} Let $X$ be a metric space. Then $\MLdim \colon \iK(X)\to [0,\infty]$ is Borel measurable. More precisely, it is of Baire class $2$.
\end{meas}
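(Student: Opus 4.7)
The plan is to use Theorem~\ref{t:character} to express the condition $\MLdim K > \alpha$ as an $F_\sigma$ condition on $\iK(X)$; Baire class $2$ will then follow by standard Borel hierarchy computations. Since every $K \in \iK(X)$ is compact and hence complete, Theorem~\ref{t:character} yields
$$\{K \in \iK(X) : \MLdim K > \alpha\} = \bigcup_{(k,l)} R_{k,l},$$
a countable union over pairs of integers $k, l \ge 2$ with $\log l / (k \log 2) > \alpha$, where $R_{k,l} = \{K \in \iK(X) : K \text{ contains a } (k,l)\text{-regular set}\}$. The main step is to prove that each $R_{k,l}$ is closed in $\iK(X)$.

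For the closedness I would introduce finite-depth approximations. For $N \ge 0$ let $V_N \subset X^{l^{\le N}}$ denote the set of tuples $(y_s)_{s \in l^{\le N}}$ satisfying the conditions of Definition~\ref{d:k,l regularity} restricted to indices of length $\le N$; as a finite intersection of closed conditions, $V_N$ is closed. Set $B_N^{k,l} = \{K \in \iK(X) : K^{l^{\le N}} \cap V_N \ne \emptyset\}$. The key observation is that $B_N^{k,l}$ is closed: given $K_m \to K$ in Hausdorff metric with witnesses $(y^m_s) \in K_m^{l^{\le N}} \cap V_N$, Hausdorff convergence provides $z^m_s \in K$ with $\rho(y^m_s, z^m_s) \to 0$, and compactness of $K$ together with the finiteness of $l^{\le N}$ allows a diagonal extraction yielding a subsequential pointwise limit $(y^\infty_s) \in K^{l^{\le N}} \cap V_N$. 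A Tychonoff argument in the compact product $K^{l^{<\omega}}$ then shows $R_{k,l} = \bigcap_N B_N^{k,l}$: extending each depth-$N$ witness arbitrarily to all of $l^{<\omega}$ and extracting a pointwise convergent subsequence produces a full $(k,l)$-regular set in $K$. Hence $R_{k,l}$ is closed and $\{K : \MLdim K > \alpha\}$ is $F_\sigma$ for every $\alpha \ge 0$.

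From this it follows that $\{K : \MLdim K \ge \beta\} = \bigcap_n \{K : \MLdim K > \beta - 1/n\}$ is $F_{\sigma\delta}$, so $\{K : \MLdim K < \beta\}$ is $G_{\delta\sigma}$. Every open $U \subset [0, \infty]$ is a countable union of intervals of the forms $(\alpha, \infty]$, $[0, \beta)$ and $(\alpha, \beta)$, whose preimages under $\MLdim$ are respectively $F_\sigma$, $G_{\delta\sigma}$ and (as finite intersections of the previous two) $G_{\delta\sigma}$; since $G_{\delta\sigma}$ is closed under countable unions, $\MLdim^{-1}(U) \in G_{\delta\sigma}$ for every open $U$, proving that $\MLdim$ is of Baire class $2$. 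The main obstacle is the closedness of $R_{k,l}$: the inequalities in Definition~\ref{d:k,l regularity} are non-strict, so neither openness nor closedness is transparent, and the argument relies crucially on the compactness of each $K$ to extract subsequential limits both at each finite level and, via Tychonoff, across all levels simultaneously.
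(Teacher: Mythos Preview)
Your proposal is correct and follows essentially the same route as the paper: both invoke Theorem~\ref{t:character} to write $\{K:\MLdim K>\alpha\}$ as a countable union of the sets $R_{k,l}$ and then show each $R_{k,l}$ is closed via a diagonal-extraction argument using compactness of the limit $K$. The only cosmetic difference is that you factor the closedness proof through the finite-depth sets $B_N^{k,l}$ and a Tychonoff step, whereas the paper carries out a single direct diagonal argument on the full family $\{y^i_s\}_{s\in l^{<\omega}}$; the underlying mechanism is identical.
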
 

\begin{remark}
As both finite sets and finite unions of closed balls are dense in $\iK(\R^d)$, the map $\MLdim \colon \iK(\R^d)\to [0, d]$ attains both the values $0$ and $d$ on dense sets, therefore it has no point of continuity. However, it is well known that 
$\iK(\R^d)$ is a Polish space \cite[Theorem~4.25]{Ke}, and real-valued Baire class $1$ functions on Polish spaces have points of continuity \cite[Theorem~24.15]{Ke}. Hence $\MLdim$ is not of Baire class $1$, so Theorem~\ref{t:meas} is sharp in general. 
\end{remark}

In Section~\ref{s:Effros} we consider $\ell^1 = \left\{x \in \R^\omega: \sum_{n=0}^{\infty} |x(n)| < \infty\right\}$ equipped with the norm 
\begin{equation*} \|x\|= \sum_{n=0}^{\infty}  |x(n)|.
\end{equation*} 
Let $\mathcal{F}(\ell^1)$ be the set of closed subsets of $\ell^1$. We endow $\mathcal{F}(\ell^1)$ with the $\sigma$-algebra $\mathcal{B}$ generated by the sets
\begin{equation} \label{e:Effros}
\{F \in \mathcal{F}(\ell^1) : F \cap U \neq \emptyset\},
\end{equation}
where $U$ runs over the open subsets of $\ell^1$. By \cite[Theorem~12.6]{Ke} the measurable space $(\mathcal{F}(\ell^1), \mathcal{B})$ is \emph{standard Borel}, that is, there is a Polish topology on $\mathcal{F}(\ell^1)$ such that its family of Borel sets coincides with $\mathcal{B}$. We prove the following theorem.
\begin{Effros}  
The map $\MLdim \colon \mathcal{F}(\ell^1) \to [0,\infty]$ is not Borel measurable. 
\end{Effros}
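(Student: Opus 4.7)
The approach is to prove non-measurability by a Borel reduction from the set $\illfounded$ of ill-founded trees on $\N$, which is a well-known example of a non-Borel subset of the Polish space $\trees(\N) \subseteq 2^{\N^{<\omega}}$. More precisely, I will construct an Effros-Borel map $T \mapsto F_T$ from $\trees(\N)$ to $\mathcal{F}(\ell^1)$ with the property that $T \in \illfounded$ if and only if $\MLdim F_T \geq 1/2$. If $\MLdim$ were Borel measurable, the preimage of $[1/2, \infty]$ under this map would exhibit $\illfounded$ as a Borel set, a contradiction.

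For the construction, identify $\ell^1$ with $\ell^1(\N \times \N \times \{0,1\})$ via any fixed bijection of the index sets, and let $(e_{(i,j,b)})$ denote the standard basis. For each $s \in \N^{<\omega}$ with $|s|=n$ and each $t \in 2^n$, put
\[
y_{s,t} = \sum_{i=0}^{n-1} 2^{-2i-1}\, e_{(i, s_i, t_i)},
\]
and set $F_T = \overline{\{y_{s,t} : s \in T,\ t \in 2^{|s|}\}}$. Since the closure of a set meets an open set exactly when the set does, for every open $U \subseteq \ell^1$ we have
\[
\{T : F_T \cap U \neq \emptyset\} = \bigcup_{(s,t):\, y_{s,t} \in U} \{T : s \in T\},
\]
a countable union of clopen subsets of $\trees(\N)$, so $T \mapsto F_T$ is Effros-Borel.

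For the ill-founded direction, fix an infinite branch $\sigma \in [T]$ and, for $t \in 2^n$ with $n \in \N$, define $z_t = y_{\sigma|_n,\, t}$. A direct $\ell^1$ computation yields $\|z_t - z_{t^\frown j}\| = 2^{-2n-1}$, matching condition (i) of Definition~\ref{d:k,l regularity} with $k=2$; and for distinct $t, t' \in 2^n$ whose first disagreement is at index $i^* \leq n-1$ one gets $\|z_t - z_{t'}\| \geq 2 \cdot 2^{-2i^*-1} \geq 2^{-2n+2}$, verifying condition (ii). Thus $\{z_t\}_{t \in 2^{<\omega}}$ is a $(2,2)$-regular subset of $F_T$, and since $\ell^1$ is complete, Theorem~\ref{t:character} gives $\MLdim F_T \geq 1/2$.

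The main obstacle is the well-founded direction: showing that $T \notin \illfounded$ forces $\MLdim F_T = 0$. The plan is to argue that $G_T := \{y_{s,t} : s \in T,\, t \in 2^{|s|}\}$ is already closed and discrete in $\ell^1$. Any convergent sequence of pairwise distinct points in $G_T$ must have lengths $|s^{(k)}| \to \infty$ (by pigeonhole on the finite collection of indices available at each fixed length), and then coordinate-wise convergence in $\ell^1$ forces $(s^{(k)}_i, t^{(k)}_i)$ to stabilize for every $i$, producing $\sigma \in \N^\omega$ with $\sigma|_n \in T$ for every $n$, contradicting $[T] = \emptyset$. Hence $F_T = G_T$ and every point of $F_T$ is isolated. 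Consequently, for any $E \subseteq F_T$ with at least two points an isolated point $x \in E$ has some isolation radius $\delta > 0$ with $\diam E \geq \delta$, and then $B_E(x, \delta/2) = \{x\}$ gives $N_r(B_E(x, \delta/2)) = 1$ for every $r > 0$, precluding $\dim_L E > 0$. Hence $\MLdim F_T = 0$ and the proof is complete.
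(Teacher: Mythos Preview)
Your proof is correct and follows essentially the same route as the paper: a Borel reduction from $\illfounded$ via a map sending each node of the tree to a finite ``binary fan'' of $\ell^1$-points built from disjoint basis vectors, then invoking Theorem~\ref{t:character} on a $(2,2)$-regular set along an infinite branch, and showing discreteness (hence $\MLdim=0$) in the well-founded case. The only cosmetic differences are your explicit indexing $\ell^1(\N\times\N\times\{0,1\})$ in place of the paper's abstract choice of distinct indices $n_u^i$, and your threshold $\ge 1/2$ versus the paper's $>0$.
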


\section{Finite stability of the modified lower dimension} \label{s:stability}

The goal of this section is to prove Theorem~\ref{t:stability}. First we need the following lemma. 

\begin{lemma} \label{l:L}
Let $(Y,\rho)$ be a metric space and let $U\subset Y$ be a non-empty open set. Then there exists an open set $V\subset U$ such that $\dim_L V\geq \dim_L Y$. In particular, $\MLdim U\geq \dim_L Y$.
\end{lemma}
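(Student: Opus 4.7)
Fix $\alpha < \dim_L Y$; the plan is to produce an open $V \subset U$ with $\dim_L V \geq \alpha$ and then let $\alpha \uparrow \dim_L Y$. By the definition of the lower dimension there exists $C > 0$ such that $N_r(B(x, R)) \geq C (R/r)^\alpha$ for every $x \in Y$ and $0 < r < R \leq \diam Y$. Since $U$ is open and non-empty, pick $x_0 \in U$ and $\rho_0 > 0$ with $B(x_0, \rho_0) \subset U$, and take $V = \{y \in Y : \rho(y, x_0) < \rho_0/4\}$, the corresponding open ball, so that $V \subset U$ and $\diam V \leq \rho_0/2$.

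The key reduction is to exhibit, for every $x \in V$ and every $0 < r < R \leq \diam V$, a point $y \in V$ and a radius $R' \geq cR$, with $c > 0$ a universal constant, for which $B(y, R') \subset B_V(x, R) := B(x, R) \cap V$. Granting this, applying the $\dim_L$-inequality for $Y$ to $B(y, R')$ gives
\[
N_r(B_V(x, R)) \geq N_r(B(y, R')) \geq C(R'/r)^\alpha \geq C c^\alpha (R/r)^\alpha,
\]
which yields $\dim_L V \geq \alpha$ with constant $Cc^\alpha$, whence $\MLdim U \geq \dim_L V \geq \dim_L Y$ upon letting $\alpha \uparrow \dim_L Y$.

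The requirements on $y$ are $\rho(y, x_0) + R' < \rho_0/4$, which forces $B(y, R') \subset V$, and $\rho(y, x) + R' \leq R$, which forces $B(y, R') \subset B(x, R)$. When $x$ lies well inside $V$, the trivial choice $y = x$ suffices. I expect the main obstacle to be the complementary case: when $x$ is close to $\partial V$ and $R$ is a substantial fraction of $\diam V$, the point $y$ must essentially interpolate between $x$ and $x_0$. In a general metric space the triangle inequality is necessary but not sufficient for two balls to intersect, as seen for two distant branches of the Cantor set, so the existence of such a $y$ is the delicate step. The case $\alpha = 0$ is vacuous, so we may assume $\alpha > 0$; then the hypothesis $\dim_L Y \geq \alpha$ itself forces $Y$ to be densely populated at every admissible scale, and this abundance, combined with the covering lower bound applied to a suitable auxiliary ball centered near $x_0$, should produce $y$ by a selection argument.
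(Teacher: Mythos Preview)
Your proposal has a genuine gap at exactly the point you flag as ``delicate'': you never establish that for every $x\in V$ and every $0<R\le\diam V$ there is a point $y$ with $B(y,cR)\subset V\cap B(x,R)$. Your closing sentence (``the covering lower bound applied to a suitable auxiliary ball centered near $x_0$ should produce $y$ by a selection argument'') is not an argument. Knowing that $N_r(B(x_0,R''))$ is large tells you that $B(x_0,R'')$ contains many $r$-separated points, but it gives no control over \emph{where} those points sit relative to $x$; in particular it does not produce a point that is simultaneously within $R$ of $x$ and at distance less than $\rho_0/4-cR$ from $x_0$. In the problematic regime $\delta-\rho(x,x_0)\ll R<\rho(x,x_0)$ (writing $\delta=\rho_0/4$), neither $y=x$ nor $y=x_0$ works, and in a general metric space there may be no point of $Y$ that interpolates between them at the right scale---the very obstruction you yourself mention (``two distant branches of the Cantor set'').

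The paper's proof avoids this difficulty by \emph{not} taking $V$ to be a ball. Instead it sets $V_1=U(y_0,\varepsilon)$ and $V_{n+1}=U(V_n,\varepsilon^{n+1})$, then $V=\bigcup_n V_n$. This construction builds the missing chain into $V$: any $y\in V$ lies in some $V_k\setminus V_{k-1}$ and hence admits points $y_k=y,\,y_{k-1},\dots,y_1,y_0$ with $y_i\in V_i$ and $\rho(y_i,y_{i-1})<\varepsilon^{i}$. Given $R$, one chooses $m$ with $3\varepsilon^{m+1}\le R<3\varepsilon^{m}$ and takes $z=y_m$ (or $z=y$ if $m\ge k$); then $U(z,(\varepsilon/3)R)\subset V_{m+1}\subset V$ and $U(z,(\varepsilon/3)R)\subset U(y,R)$ follow from the geometric series $\sum_{i>m}\varepsilon^i<2\varepsilon^{m+1}$. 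So the interpolating point is supplied by the inductive construction of $V$, not by any abstract selection from the lower-dimension hypothesis. Your plan, as written, is missing precisely this idea.
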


\begin{proof} For $y\in Y$ and $r>0$ let $U(y,r)$ be the open ball of radius $r$ centered at $y$, and for $A\subset Y$ let $U(A,r)=\bigcup \{ U(y,r): y\in A\}$.  
Fix $y_0\in U$ and $0<\eps<1/2$ such that $\dist(Y\setminus U,\{y_0\})>2\eps$. Set $V_0=\emptyset$ and $V_1=U(y_0,\eps)$. If $V_n$ is already defined for some $n\geq 1$ then let $V_{n+1}=U(V_n,\eps^{n+1})$. Define
\begin{equation*}
V=\bigcup_{n=1}^{\infty} V_n.
\end{equation*}
Clearly, $V$ is open, and $\sum_{n=1}^{\infty } \eps^n<2\eps$ yields $V\subset U(y_0 ,2\eps)\subset U$.
Therefore, it is enough to prove that  $\dim_L V\geq \dim_L Y$. Let $0<R<3\eps$ and $y\in V$ be arbitrarily given, it is enough to show that there exists $z\in V$ such that 
\begin{equation} \label{e:Bz} U(z,(\eps/3) R)\subset V\cap U(y,R).
\end{equation} 
Let $k,m$ be the unique positive integers such that $y\in V_k\setminus V_{k-1}$ and \begin{equation} \label{e:3} 
3\eps^{m+1}\leq R<3\eps ^m.
\end{equation}
If $m\geq k$ then $U(y,(\eps/3)R)\subset U(y,\eps^{k+1})\subset V_{k+1}\subset V$, so $z=y$ satisfies \eqref{e:Bz}.

Finally, assume that $k>m$. Define $y_1,\dots,y_k\in V$ such that $y_k=y$ and $y_{n}\in  V_n\cap U(y_{n-1},\eps^{n})$ for all $1\leq n\leq k$. Let $z=y_m$. Then \eqref{e:3} and $y_m\in V_m$ imply that
\begin{equation} \label{e:cont1}
U(z,(\eps/3)R)\subset U(y_{m},\eps^{m+1})\subset V_{m+1} \subset V.
\end{equation} 
Inequalities
\begin{equation*} 
\rho(y,y_m)<\sum_{n=m+1}^{k} \eps^n<2\eps^{m+1}
\end{equation*}  
and \eqref{e:3} yield 
\begin{equation} \label{e:cont2}
U(z,(\eps/3)R) \subset U(y_m,\eps^{m+1}) \subset U(y,3\eps^{m+1})  \subset U(y,R).
\end{equation} 
Then \eqref{e:cont1} and \eqref{e:cont2} imply \eqref{e:Bz}, and the proof is complete.
\end{proof}

\begin{theorem} \label{t:stability}
Let $X$ be a metric space and let $E,F\subset X$. Then
\begin{equation*}
\MLdim (E\cup F)=\max\{\MLdim E, \, \MLdim F\}.
\end{equation*}
\end{theorem}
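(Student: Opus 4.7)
The inequality $\MLdim(E\cup F)\geq\max\{\MLdim E,\MLdim F\}$ is immediate from the monotonicity built into the definition of $\MLdim$, so the real task is the reverse inequality. Unwinding $\MLdim(E\cup F)$ as a supremum, it suffices to fix an arbitrary $G\subset E\cup F$ and prove $\dim_L G\leq\max\{\MLdim E,\MLdim F\}$. Setting $A=G\cap E$ and $B=G\cap F$, monotonicity of $\MLdim$ reduces this further to showing $\dim_L G\leq\max\{\MLdim A,\MLdim B\}$ whenever $G=A\cup B$.

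My plan is to split on whether $A$ is dense in $G$ (and symmetrically for $B$). If $A$ is not dense, then $U=G\setminus\overline{A}^G$ is a non-empty open subset of $G$ contained in $B$, and Lemma~\ref{l:L} applied in the metric space $G$ with this $U$ produces an open $V\subset U\subset B$ with $\dim_L V\geq\dim_L G$; hence $\MLdim B\geq\dim_L G$ and we are done. If instead both $A$ and $B$ are dense in $G$, I would prove the auxiliary fact that any dense subset $A$ of a metric space $G$ already satisfies $\dim_L A\geq\dim_L G$, which immediately closes the argument.

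For that auxiliary fact I would fix $\alpha<\dim_L G$ with associated constant $C$, pick $y\in A$ and $0<r<R\leq\diam A=\diam G$, and for every $r<R'<R$ use density to observe that $A\cap U(y,R)$ is dense in the open ball $U(y,R)$ of $G$. Taking closures (inside $G$) of a minimal cover of $A\cap U(y,R)$ by sets of diameter at most $r$ then yields a cover of $U(y,R)\supset B(y,R')\cap G$ by sets of the same diameter, so that
\[
N_r(B(y,R)\cap A)\geq N_r(B(y,R')\cap G)\geq C(R'/r)^\alpha,
\]
and letting $R'\nearrow R$ gives $\dim_L A\geq\alpha$.

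The main obstacle is precisely this both-dense situation: Lemma~\ref{l:L} handles any non-dense partition cleanly, but it produces no open subset of $G$ inside $A$ or $B$ when both are topologically dense in $G$ (for instance $G=[0,1]$, $A=\Q\cap[0,1]$, $B=(\R\setminus\Q)\cap[0,1]$), so one is forced to fall back on the classical covering-number-via-closure trick and to carefully invoke the $\dim_L G$ inequality at an interior radius $R'<R$ before passing to the limit, while also checking that the closure operation does not increase diameters.
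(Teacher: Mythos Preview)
Your proof is correct and follows essentially the same route as the paper's: reduce to showing $\dim_L G\leq\max\{\MLdim A,\MLdim B\}$ for $G=A\cup B$, invoke Lemma~\ref{l:L} when one of the pieces is not dense in $G$, and in the remaining case use that a dense subset already has at least the lower dimension of the whole space. The only difference is cosmetic: the paper handles the dense case by citing the known closure stability $\dim_L E=\dim_L\overline{E}$ from \cite{F2}, whereas you supply a direct proof of the needed inequality.
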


\begin{proof} Clearly, $\MLdim (E\cup F)\geq \max\{\MLdim E, \, \MLdim F\}$ holds by the monotonicity of modified lower dimension, so it is enough to prove that 
\begin{equation} \label{e:mon}
\MLdim (E\cup F)\leq \max\{\MLdim E, \, \MLdim F\}.
\end{equation} 
Assume to the contrary that \eqref{e:mon} does not hold. By shrinking $E$ and $F$ if necessary we may suppose that
\begin{equation} \label{e:<} \max\{\MLdim E, \, \MLdim F\}<\dim_L (E\cup F).
\end{equation} 
First assume that $E$ is dense in $E\cup F$. The lower dimension is stable under taking closures, see \cite[Theorem~2.3]{F2}. Therefore, we obtain
\begin{equation*}  \MLdim E\geq \dim_L E=\dim_L \overline {E}=\dim_{L} \overline{E\cup F}=\dim_L (E\cup F),
\end{equation*}
which contradicts \eqref{e:<}. Finally, suppose that $E$ is not dense in $E\cup F$. Then there is an open set $U\subset X$ such that $U\cap F=U\cap (E\cup F)\neq \emptyset$. Applying Lemma~\ref{l:L} for $Y=E\cup F$ implies that $\MLdim (U\cap (E\cup F)\geq \dim_L (E\cup F)$. By the monotonicity of modified lower dimension we obtain 
\begin{equation*} \MLdim F \geq \MLdim (U\cap F)=\MLdim (U\cap (E\cup F))\geq \dim_L (E\cup F),
\end{equation*}
which contradicts \eqref{e:<}. The proof is complete.
\end{proof}

\section{Characterizations of the modified lower dimension} \label{s:character}

The goal of this section is to prove Theorems~\ref{t:character general} and 
\ref{t:character}. 

\begin{theorem}\label{t:character general} 
Let $(X,\rho)$ be a metric space and let $\alpha\geq 0$. The following statements are equivalent: 
\begin{enumerate}
\item \label{i.s:01} $\MLdim X>\alpha$; 
\item \label{i.s:02} $X$ contains a strongly $(k,l)$-regular set with $ \frac{\log l}{k \log 2}>\alpha$.
\end{enumerate}
\end{theorem}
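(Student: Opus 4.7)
My plan is to prove both directions separately, using the tree structure of $(k,l)$-regular sets. I expect the $(1)\Rightarrow(2)$ direction to require the most care, since condition (iii) must be arranged by hand at each step.

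For $(2)\Rightarrow(1)$, I would take a strongly $(k,l)$-regular set $\{y_s\}_{s\in l^{<\omega}}\subset X$ with $\beta:=\log l/(k\log 2)>\alpha$, set $E=\{y_s:s\in l^{<\omega}\}$, and show $\dim_L E\geq\beta$, which will yield $\MLdim X\geq\beta>\alpha$. The key observation is that condition (i) telescopes: for $s\in l^n$ and $t\in l^{n+m}$ extending $s$, the geometric sum of the step bounds $2^{-k(n+j)-1}$ gives $\rho(y_s,y_t)\leq 2^{-kn}$. Combined with condition (ii), the $l^m$ distinct extensions of $s$ at level $n+m$ form a packing inside $B(y_s,2^{-kn})$ of pairwise separation at least $2^{-k(n+m)+2}$, forcing $N_r(B(y_s,2^{-kn})\cap E)\geq l^m$ whenever $r<2^{-k(n+m)+2}$. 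To deduce the lower-dimension estimate for arbitrary $x=y_s\in E$ and arbitrary $0<r<R\leq\diam E$, I would choose the integer $n$ with $2^{-kn+1}\leq R<2^{-k(n-1)+1}$ and represent $x$ at level $n$: either as $y_{s|n}$ (an ancestor, when $n\leq|s|$) or as $y_{s^\frown 0^{n-|s|}}$ via condition (iii) (when $n>|s|$). In both cases the new representative lies within $2^{-kn}$ of $y_s$, so its extensions at level $n+m$ lie inside $B(y_s,2^{-kn+1})\subseteq B(y_s,R)$. Choosing $m$ maximal with $r<2^{-k(n+m)+2}$ then yields $N_r(B(y_s,R)\cap E)\geq l^m=2^{km\beta}$, and a short comparison of $R,r$ with $2^{-kn},2^{-k(n+m)}$ supplies a constant $C>0$ (depending only on $k,\beta$) with $N_r(B(y_s,R)\cap E)\geq C(R/r)^\beta$. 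This uniformity over $x$ is exactly what condition (iii) buys.

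For $(1)\Rightarrow(2)$, I would fix $E\subset X$ with $\dim_L E>\alpha$, pick $\gamma\in(\alpha,\dim_L E)$ together with the associated constant $C>0$ from the definition, and then set $l=\lfloor C\cdot 2^{(k-4)\gamma}\rfloor$ for $k$ so large that $\log l/(k\log 2)>\alpha$. The strongly $(k,l)$-regular set is constructed by induction on the level: pick any $y_\emptyset\in E$, and, given $y_s$ for all $s\in l^n$, set $y_{s^\frown 0}=y_s$ and greedily select $y_{s^\frown 1},\dots,y_{s^\frown(l-1)}$ inside $B(y_s,2^{-kn-1})\cap E$ at pairwise distance $\geq 2^{-k(n+1)+2}$ from one another and from $y_s$. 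The standard packing--covering duality bounds the greedy packing size from below by $N_{2^{-k(n+1)+3}}(B(y_s,2^{-kn-1})\cap E)$, which by the lower dimension hypothesis (applied with $R=2^{-kn-1}$, $r=2^{-k(n+1)+3}$, so that $R/r=2^{k-4}$) is at least $C\cdot 2^{(k-4)\gamma}\geq l$. Condition (ii) for sibling pairs at level $n+1$ is enforced by the construction itself; for non-sibling pairs it follows from condition (ii) at level $n$ together with two applications of the triangle inequality, since $2^{-kn+2}-2\cdot 2^{-kn-1}=3\cdot 2^{-kn}\geq 2^{-k(n+1)+2}$ for every $k\geq 1$.

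The main obstacle I expect is that the lower-dimension estimate requires $R\leq\diam E$, so at very small $n$ the ball radius $2^{-kn-1}$ may exceed $\diam E$. In that regime, however, every point of $E$ lies within $\diam E\leq 2^{-kn-1}$ of $y_s$, so $B(y_s,2^{-kn-1})\cap E=E$, and I can instead invoke the lower-dimension estimate at the maximal legitimate scale $R=\diam E$ to find a packing in $E$ of size at least a constant times $\diam(E)^\gamma\cdot 2^{k\gamma}$. Both lower bounds grow like $2^{k\gamma}$ in $k$, so fixing $k$ large enough in advance (depending on $E$) makes the induction proceed uniformly and produces the required strongly $(k,l)$-regular set.
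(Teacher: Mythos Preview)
Your proposal is correct and follows essentially the same route as the paper's proof: for $(2)\Rightarrow(1)$ you compute $\dim_L$ of the set $\{y_s\}$ itself by representing an arbitrary $y_u$ at the level $n$ determined by $R$ (using condition~(iii) when $|u|<n$), and for $(1)\Rightarrow(2)$ you build the tree greedily inside a subset of large lower dimension with $l=\lfloor C\,2^{(k-4)\gamma}\rfloor$, invoking the packing--covering duality to guarantee $l$ children at each node. Two minor differences are worth noting: you explicitly check condition~(ii) for non-sibling pairs via the triangle inequality (the paper leaves this implicit), and for the edge case $2^{-kn-1}>\diam E$ you argue level-by-level, whereas the paper simply replaces $C$ by $C(\diam E)^\gamma$ once at the outset so that the covering estimate holds uniformly for all $R\le 1$; the latter is a bit cleaner but both are fine.
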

\begin{proof} First we prove $\eqref{i.s:01} \Rightarrow \eqref{i.s:02}$. We may assume by shrinking $X$ if necessary that $\dim_L X>\alpha$. Choose $\beta$ so that $\alpha < \beta < \dim_L X$. The definition of lower dimension implies that there exists a constant $C > 0$ such that for all $0<r<R \le \diam X$ and $x \in X$ we have
$N_r(B(x, R)) \geq C\left(\frac{R}{r}\right)^{\beta}$.
If $\diam X < 1$, then by replacing $C$ with $C (\diam X)^{\beta}$ we obtain that the above property holds for all $R \le 1$, that is, 
\begin{equation}\label{e:lower dim for R <= 1}
N_r(B(x, R)) \ge C\left(\frac{R}{r}\right)^{\beta} \text{ for all }  0 < r < R \le 1 \text{ for all } x \in X.
\end{equation}
As $\beta>\alpha$, we can fix a large enough integer $k$ such that $k\geq 5$ and
\begin{equation*} l=\left\lfloor C\cdot 2^{(k-4)\beta} \right\rfloor \text{ satisfy } \frac{\log l}{k \log 2} > \alpha,
\end{equation*}
where $\lfloor \cdot \rfloor$ denotes the integer part. Let us pick any $y_{\emptyset} \in X$ and suppose that $y_s$ is already defined for some $n\in \N$ and $s \in l^{n}$. Let $c\in \{0,\dots, l-1\}$, we need to define $y_{s^ \frown c}\in X$ satisfying the conditions of Definition~\ref{d:k,l regularity}. This will give the required $(k,l)$-regular set. 
	
Let $R = 2^{-kn-1}$ and $r =2^{-k(n+1)+3}$. By $k \ge 5$ we obtain $0 < r < R \le 1$. Let $Z$ be a maximal family of points from $B(y_s, R)$ containing $y_s$ such that 
\begin{equation} \label{e:rhoz} \rho(z, z') \geq  2^{-k(n+1)+2} \text{ for distinct points } z, z' \in Z.
\end{equation}  
It is enough to prove that $\#Z\geq l$. Indeed, then we can choose distinct points $y_{s^\frown c}\in Z$ for all $c\in \{0,\dots,l-1\}$ with $y_{s^\frown 0} = y_s$. By $Z\subset B(y_s,R)$ we obtain $\rho(y_s,y_{s^\frown c})\leq 2^{-kn-1}$ for all $c$, so Definition~\ref{d:k,l regularity}~\eqref{i:kl1} holds. Inequality~\eqref{e:rhoz} yields that the points $y_{s^\frown c}$ satisfy Definition~\ref{d:k,l regularity}~\eqref{i:kl2}, while Definition \ref{d:k,l regularity} \eqref{i:kl3} clearly holds for $s$.
	
Finally, we prove $\#Z\geq l$. Using the maximality of $Z$ we obtain that
\begin{equation}\label{e:covering of K with big balls}
B(y_s, R)  \subset \bigcup_{z \in Z} B\left(z,2^{-k(n+1)+2}\right).
\end{equation}
As the union in \eqref{e:covering of K with big balls} is a covering of $B(y_s, R)$ with sets of diameter at most $r$, using also \eqref{e:lower dim for R <= 1} we obtain that 
\begin{align*}
\#Z \ge N_r(B(y_s, R)) \ge C \left(\frac{R}{r}\right)^{\beta} =C\cdot 2^{(k-4)\beta}\geq l, \end{align*}
and the proof of $\eqref{i.s:01} \Rightarrow \eqref{i.s:02}$ is complete. 

Now we prove $\eqref{i:02} \Rightarrow \eqref{i:01}$. Assume that $Y = \{y_s\}_{s \in l^{<\omega}}$ is a strongly  $(k,l)$-regular set in $X$ with $\frac{\log l}{k \log 2}>\alpha$. 
	By the definition of modified lower dimension it is enough to show for \eqref{i.s:01} that
	\begin{equation} \label{e.s:LE} 
	\dim_L Y \ge \frac{\log l}{k \log 2}.
	\end{equation} 
	We will check the definition for arbitrarily given $0<r < R \le \diam Y\leq 2$ and $y_u \in Y$ for some $u\in l^{<\omega}$. Let $n\geq 1$ and $m\geq -1$ be the unique integers  such that  
	\begin{equation} \label{e.s:rR}
	2^{-kn+1} < R \le 2^{-k(n-1)+1} \quad \text{and} \quad 
	2^{-k(n+m+1)+1} \leq r < 2^{-k(n+m)+1}.
	\end{equation} 
	Let $s\in l^n$ with $s = u \restriction n$ if $\length(u) \ge n$ and $s = u^\frown (0, \dots, 0)$ if $\length(u) < n$, where $\length(u)$ denotes the number of coordinates of $u$ and $u \restriction n$ is the restriction of $u$ to its first $n$ coordinates. Note that by Definition \ref{d:k,l regularity}~\eqref{i:kl1} if $t$ extends $s$ then 
	\begin{equation}
	    \label{e:t extends s}
	    \rho(y_s, y_t) < 2^{-kn}.
	\end{equation}
	Note also that if $\length(u) < n$ then $y_s = y_u$, so in either case, 
	\begin{equation}
	    \label{e:dist s u}
	    \rho(y_s, y_u) < 2^{-kn}.
	\end{equation}
	By Definition~\ref{d:k,l regularity}~\eqref{i:kl2} we obtain 
\begin{equation*} \rho(y_t, y_{t'}) \geq 2^{-k(n+m)+2} \text{ for all distinct sequences } t, t' \in l^{n+m},
\end{equation*} 
so each set of diameter $r<2^{-k(n+m)+1}$ can contain at most one of these points. We claim that 
	\begin{equation} \label{e.s:Nr}  
	N_r(B(y_u, R)\cap Y) \ge l^m.
	\end{equation}  
	The inequality is clear if $m = -1$. Let $m \ge 0$ be fixed. The number of sequences $t \in l^{n+m}$ that extend $s$ is exactly $l^m$, and each such $y_t$ satisfies $\rho(y_s, y_t) < 2^{-kn}$ by \eqref{e:t extends s}. Thus \eqref{e:dist s u} yields that $y_t$ is contained in the ball $B(y_u, R)$, so \eqref{e.s:Nr} holds. 
	
	Hence \eqref{e.s:rR} and \eqref{e.s:Nr} yield that $\alpha = \frac{\log l}{k \log2}$ satisfies 
	\begin{equation*} 
	\left(\frac{R}{r}\right)^\alpha \le 2^{\alpha k(m+2)} = l^{2+m} \leq l^2 N_r(B(y_u, R)\cap Y).
	\end{equation*}
	Hence applying the definition of the lower dimension with the absolute constant $C = l^{-2}$ yields \eqref{e.s:LE}. This completes the proof of the theorem.
\end{proof}

\begin{example} \label{ex}
The word `strongly' cannot be removed from Theorem~\ref{t:character general}, even in separable metric spaces: For all $n\in \N$ and $s\in 2^n$ let us define 
\begin{equation*} 
y_s=\sum_{i=0}^{n-1} (2s(i)-1)2^{-2i-1},
\end{equation*} 
where we assume that $y_\emptyset = 0$.
Let $X=\{y_s\}_{s\in 2^{<\omega}}$ be a countable metric space with the inherited metric from $\R$. It is easy to see that $X$ is a $(2,2)$-regular set in itself. However, it is also not hard to see that $X$ is discrete, so every subset of $X$ is of lower dimension $0$, hence $\MLdim X=0$.
\end{example}

However, if we assume completeness then we can replace strong regularity by regularity. 

\begin{theorem}\label{t:character} 
	Let $(X,\rho)$ be a complete metric space and let $\alpha\geq 0$. The following statements are equivalent: 
	\begin{enumerate}
		\item \label{i:01} $\MLdim X>\alpha$; 
		\item \label{i:02} $X$ contains a $(k,l)$-regular set with $ \frac{\log l}{k \log 2}>\alpha$.
	\end{enumerate}
\end{theorem}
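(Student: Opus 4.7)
The implication $(\ref{i:01}) \Rightarrow (\ref{i:02})$ is immediate from Theorem~\ref{t:character general}, since every strongly $(k,l)$-regular set is, in particular, a $(k,l)$-regular set.

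For $(\ref{i:02}) \Rightarrow (\ref{i:01})$, the plan is to use completeness of $X$ to upgrade the given $(k,l)$-regular set to one that is strongly $(k,l)$-regular in an approximate quantitative sense, and then rerun the $N_r$-estimate from the second half of the proof of Theorem~\ref{t:character general}. Let $\{y_s\}_{s \in l^{<\omega}}$ be a $(k,l)$-regular set in $X$ with $\frac{\log l}{k \log 2} > \alpha$; we may assume $k \geq 2$. For each $s \in l^{<\omega}$ define $z_s = \lim_{m \to \infty} y_{s^\frown 0^m}$. By condition~\eqref{i:kl1} of Definition~\ref{d:k,l regularity} the sequence $(y_{s^\frown 0^m})_m$ is Cauchy with consecutive distances summed by a geometric series of ratio $2^{-k}$, so by completeness the limit exists in $X$. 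Directly from the definition, $z_{s^\frown 0} = z_s$, so $Z = \{z_s : s \in l^{<\omega}\}$ satisfies condition~\eqref{i:kl3} exactly.

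Using the geometric bound $\rho(y_s, z_s) \leq \frac{2^{-kn-1}}{1-2^{-k}} \leq \frac{2}{3} \cdot 2^{-kn}$ for $s \in l^n$ (valid for $k \geq 2$), together with conditions~\eqref{i:kl1} and~\eqref{i:kl2} for $\{y_s\}$, the triangle inequality yields
\[ \rho(z_s, z_t) \leq \tfrac{4}{3} \cdot 2^{-kn} \text{ whenever } s \in l^n, \ t \in l^{n+1}, \text{ and } t \text{ extends } s, \]
and
\[ \rho(z_s, z_{s'}) \geq \tfrac{8}{3} \cdot 2^{-kn} \text{ whenever } s, s' \in l^n \text{ are distinct.} \]
Thus $Z$ satisfies the analogues of~\eqref{i:kl1} and~\eqref{i:kl2} up to bounded multiplicative constants.

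The proof of $(\ref{i.s:02}) \Rightarrow (\ref{i.s:01})$ of Theorem~\ref{t:character general} now applies to $Z$ mutatis mutandis: given $0 < r < R \leq \diam Z$ and $z_u \in Z$, pick integers $n, m$ compatible with $R, r$ up to the new constants, set $s = u \restriction n$ when $\length(u) \geq n$ and $s = u^\frown 0^{n-\length(u)}$ otherwise (so that $z_s = z_u$ by~\eqref{i:kl3}), and verify that the $l^m$ distinct points $\{z_t : t \in l^{n+m}, \ t \text{ extends } s\}$ all lie in $B(z_u, R)$ and are pairwise at distance greater than $r$. The modified multiplicative constants change only the absolute constant in the definition of $\dim_L$ and shift the integer intervals where $n, m$ are chosen, but not the exponent, giving $\dim_L Z \geq \frac{\log l}{k \log 2}$ and hence $\MLdim X \geq \dim_L Z > \alpha$. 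The main obstacle is the constant bookkeeping in this last step: the ratio $2$ between the two bounds is much smaller than the factor $8$ available for strongly $(k,l)$-regular sets, so one must verify that the usable intervals for $n, m$ still cover all pairs $0 < r < R \leq \diam Z$.
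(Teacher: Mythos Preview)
Your approach is correct but differs from the paper's. The paper does not pass to limit points along $0$-branches; instead it forms the Cantor-type set
\[
K=\bigcap_{n\geq 0}\bigcup_{s\in l^{n}} B\bigl(y_s,2^{-kn}\bigr)
\]
and estimates $N_r(B(x,R)\cap K)$ directly, using that the balls $D_s=B(y_s,2^{-kn})$ are nested along branches and well separated at each level. Your construction produces the countable set $Z=\{z_s\}$ with $z_s=\lim_m y_{s^\frown 0^m}$; this sits inside $K$ and essentially recovers a strongly $(k,l)$-regular family up to constants, making the reduction to Theorem~\ref{t:character general} transparent. Both routes give $\dim_L\geq \frac{\log l}{k\log 2}$ with the same absolute constant $C=l^{-2}$.

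On the constant bookkeeping you flag: it goes through without trouble. From $\rho(y_s,z_t)\leq \tfrac{2}{3}\,2^{-kn}$ whenever $t$ extends $s\in l^{n}$ you get $\rho(z_u,z_t)\leq \tfrac{8}{3}\,2^{-kn}$ for $t$ extending $s$ (with $s$ chosen from $u$ as you describe), while distinct $z_t,z_{t'}\in l^{n+m}$ are $\tfrac{8}{3}\,2^{-k(n+m)}$-separated. Choosing $n,m$ via
\[
\tfrac{8}{3}\,2^{-kn}<R\leq \tfrac{8}{3}\,2^{-k(n-1)},\qquad \tfrac{8}{3}\,2^{-k(n+m+1)}\leq r<\tfrac{8}{3}\,2^{-k(n+m)},
\]
one has $n\geq 1$ (since $\diam Z\leq \tfrac{8}{3}$), $m\geq -1$, and $(R/r)^{\alpha}\leq l^{m+2}\leq l^{2}N_r(B(z_u,R)\cap Z)$ exactly as in Theorem~\ref{t:character general}. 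So the ``ratio $2$'' you worry about is in fact the same effective ratio as in the strongly regular case and causes no loss. The remark ``we may assume $k\geq 2$'' is unnecessary, since $k\geq 2$ is part of Definition~\ref{d:k,l regularity}.
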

\begin{proof} Implication \eqref{i:01} $\Rightarrow$ \eqref{i:02} follows from the analogous implication of Theorem \ref{t:character general}. Hence, it remains to prove $\eqref{i:02} \Rightarrow \eqref{i:01}$. Assume that $\{y_s\}_{s \in l^{<\omega}}$ is a $(k,l)$-regular set in $X$ with $\frac{\log l}{k \log 2}>\alpha$. For all $n\in \N$ and $s\in l^n$ let $D_s=B(y_s,2^{-kn})$. Let us define
\begin{equation*} 
K=\bigcap_{n =0}^{\infty} \bigcup_{s \in l^n} D_s.
\end{equation*} 
By the definition of modified lower dimension it is enough to show for \eqref{i:01} that
\begin{equation} \label{e:LE} 
\dim_L K \ge \frac{\log l}{k \log 2}.
\end{equation} 
We will check the definition for arbitrarily given $0<r < R \le \diam K\leq 2$ and $x \in K$. Let $n\geq 1$ and $m\geq -1$ be the unique integers  such that  
\begin{equation} \label{e:rR}
2^{-kn+1} < R \le 2^{-k(n-1)+1} \quad \text{and} \quad 
2^{-k(n+m+1)+1} \leq r < 2^{-k(n+m)+1}.
\end{equation} 
Choose the unique $s\in l^n$ with $x\in D_{s}$. Then $R> 2^{-kn+1}$ yields $D_{s} \subset B(x, R)$. By Definition~\ref{d:k,l regularity}~\eqref{i:kl2} we obtain 
$\dist(D_t, D_{t'}) \geq 2^{-k(n+m)+1}$ for all distinct sequences $t, t' \in l^{n+m}$, so each set of diameter $r<2^{-k(n+m)+1}$ can intersect at most one of these balls. As $D_t\cap K\neq \emptyset$ and the number of sets $D_t$ with $t \in l^{n+m}$ and $D_t \subset D_s$ is $l^m$, we obtain 
\begin{equation} \label{e:Nr}  
N_r(B(x, R)\cap K) \ge l^m,
\end{equation}  
which holds for $m=-1$ as well. 
Thus \eqref{e:rR} and \eqref{e:Nr} yield that $\alpha = \frac{\log l}{k \log2}$ satisfies 
\begin{equation*} 
\left(\frac{R}{r}\right)^\alpha \le 2^{\alpha k(m+2)} = l^{2+m} \leq l^2 N_r(B(x, R)\cap K).
\end{equation*}
Hence applying the definition of the lower dimension with the absolute constant $C = l^{-2}$ yields \eqref{e:LE}, and the proof of the theorem is complete. 
\end{proof}

\section{Measurability of the modified lower dimension} \label{s:meas} 
The goal of this section is to prove Theorem~\ref{t:meas} using a  characterization from the previous section. First we need some preparation. 

\begin{definition} 
Let $Y$ be a metric space and let $A\subset Y$. We say that $A$ is $F_{\sigma}$ if it is a countable union of closed sets, $A$ is $G_{\delta}$ if it is a countable intersection of open sets, and $A$ is $G_{\delta \sigma}$ if it is a countable union of $G_{\delta}$ sets. We say that $f\colon Y\to [0,\infty]$ is of \emph{Baire class $2$} if the sets $\{y\in Y: f(y)>\alpha\}$ and $\{y\in Y: f(y)<\alpha\}$ are $G_{\delta \sigma}$ for all $\alpha\geq 0$.
\end{definition}

\begin{definition} \label{d:H}
For a metric space $(X,\rho)$ let $(\mathcal{K}(X),d_{H})$ be the set of non-empty compact subsets of $X$ endowed with the \emph{Hausdorff metric}, that is, for every $K_1,K_2\in \mathcal{K}(X)$ we have
\begin{equation*} 
d_{H}(K_1,K_2)=\min \left\{r: K_1\subset B(K_2,r) \textrm{ and } K_2\subset B(K_1,r)\right\},
\end{equation*}
where $B(A,r)=\{x\in X: \exists y\in A \textrm{ such that } \rho(x,y)\leq r\}$.
\end{definition}

\begin{theorem} \label{t:meas} Let $X$ be a metric space. Then $\MLdim \colon \iK(X)\to [0,\infty]$ is Borel measurable. More precisely, it is of Baire class $2$.
\end{theorem}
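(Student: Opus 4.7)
The plan is to leverage Theorem~\ref{t:character}. Since each $K \in \iK(X)$ is compact, hence complete, that theorem yields
\[
\{K \in \iK(X) : \MLdim K > \alpha\} = \bigcup_{\substack{k, l \geq 2 \\ \log l / (k \log 2) > \alpha}} \mathcal{R}_{k,l},
\]
where $\mathcal{R}_{k,l} = \{K \in \iK(X) : K \text{ contains a } (k,l)\text{-regular set}\}$. This union is countable, so it will suffice to prove that each $\mathcal{R}_{k,l}$ is closed in $\iK(X)$. Granted this, $\{\MLdim > \alpha\}$ is $F_\sigma$ (hence $G_{\delta\sigma}$), its complement $\{\MLdim \leq \alpha\}$ is $G_\delta$, and $\{\MLdim < \alpha\} = \bigcup_{\beta \in \Q \cap [0,\alpha)} \{\MLdim \leq \beta\}$ is $G_{\delta\sigma}$, completing the proof.

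To establish closedness of $\mathcal{R}_{k,l}$ I would combine a finite-level approximation with a compactness-plus-diagonal argument. For each $n \in \N$ define $\mathcal{A}_n$ to be the set of $K \in \iK(X)$ admitting a tuple $(y_s)_{s \in l^{\leq n}} \subset K$ satisfying conditions \eqref{i:kl1} and \eqref{i:kl2} of Definition~\ref{d:k,l regularity} for all relevant pairs of indices of length at most $n$. The two things to prove would be (a) $\mathcal{A}_n$ is closed in $\iK(X)$, and (b) $\mathcal{R}_{k,l} = \bigcap_{n} \mathcal{A}_n$.

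For (a), if $K_j \to K$ in Hausdorff metric and $(y_s^j)_{s \in l^{\leq n}} \subset K_j$ witnesses $K_j \in \mathcal{A}_n$, then $\dist(y_s^j, K) \to 0$ for each fixed $s$, so using compactness of $K$ and a diagonal argument over the finite index set $l^{\leq n}$ we extract a subsequence along which every $y_s^j$ converges to some $y_s \in K$; the inequalities \eqref{i:kl1} and \eqref{i:kl2} are closed and so pass to the limit. For (b), the inclusion $\mathcal{R}_{k,l} \subset \bigcap_n \mathcal{A}_n$ is trivial. Conversely, given $K \in \bigcap_n \mathcal{A}_n$, pick for each $n$ a witnessing tuple, pad it to an element of the space $K^{l^{<\omega}}$ (which is compact by Tychonoff) in any fashion, and extract a coordinatewise convergent subsequence. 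For any fixed level $m$ the constraints at that level are active once $n$ exceeds $m$, so by closedness the limit is a full $(k,l)$-regular set inside $K$.

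The main obstacle I anticipate is verifying (a) and (b) cleanly; both rely on compactness of $K$, which is precisely what allows us to catch limit points inside $K$ rather than lose them to its exterior, and on the fact that the regularity conditions are defined by weak inequalities. Once $\mathcal{R}_{k,l}$ is shown to be closed, the Baire class $2$ conclusion follows mechanically from the definition recalled at the start of the section.
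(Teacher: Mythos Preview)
Your proposal is correct and follows essentially the same approach as the paper: reduce via Theorem~\ref{t:character} to showing that $\mathcal{R}_{k,l}=\iF(k,l)$ is closed, then use compactness of $K$ together with a diagonal/subsequence argument and the fact that the regularity inequalities are weak. The only difference is organizational: the paper proves closedness of $\iF(k,l)$ in one step by directly diagonalizing the full regular sets $\{y^i_s\}_{s\in l^{<\omega}}$ carried by $K_i\to K$, whereas you pass through the finite-level approximations $\mathcal{A}_n$ and then run a second compactness argument in (b); this detour is harmless but unnecessary.
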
 
\begin{proof}

Let $(X,\rho)$ be a metric space, we prove that $\MLdim \colon \iK(X)\to [0,\infty]$ is of Baire class $2$. It is enough to show that $\{K \in \mathcal{K}(X) : \MLdim K > \alpha\}$ is an $F_\sigma$ set for each $\alpha \geq 0$. Indeed, this would imply that $\{K \in \mathcal{K}(X) : \MLdim K < \alpha\}$ is a $G_{\delta \sigma}$ set for all $\alpha\geq 0$, so $\MLdim$ is of Baire class 2. For integers $k,l\geq 2$ let
\begin{equation*}
\iF(k,l)=\{K \in \mathcal{K}(X) : K \text{ contains a  $(k,l)$-regular set}\}.
\end{equation*}  
Fix $\alpha\geq 0$. Theorem~\ref{t:character} implies that  
\begin{equation*}
\{K \in \mathcal{K}(X) : \MLdim K > \alpha\} = \bigcup \left\{ \iF(k,l): k,l\geq 2,~\frac{\log l}{k \log 2} > \alpha \right\}.
\end{equation*}
Hence it is enough to show that the sets $\iF(k,l)$ are closed. Fix $k$ and $l$ and assume that $K_i \in \iF(k,l)$ for each $i$ and $K_i \to K$ in the Hausdorff metric, it remains to show that $K \in  \iF(k,l)$. For all $i\in \N$ let $\{y^i_s\}_{s \in l^{<\omega}}$ be a $(k,l)$-regular set in $K_i$. By successively taking subsequences and considering the diagonal sequence, we may suppose that $y^i_s\to y_s$ as $i\to \infty$ for each $s \in l^{<\omega}$ with some $y_s \in K$. Then it is straightforward that $\{y_s\}_{s \in l^{<\omega}}$ is a $(k,l)$-regular set in $K$, so $K\in \iF(k,l)$. The proof of the theorem is complete. 
\end{proof}

\section{A non-measurability result in $\ell^1$} \label{s:Effros}

The goal of this section is to prove the following theorem.

\begin{theorem} \label{t:Effros} 
The map $\MLdim \colon \mathcal{F}(\ell^1) \to [0,\infty]$ is not Borel measurable. 
\end{theorem}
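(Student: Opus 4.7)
The plan is to reduce the non-Borel set $\illfounded \subseteq \trees(\N)$ (the ill-founded trees on $\N$, known to be $\Sigma^1_1$-complete; see \cite[Theorem~27.1]{Ke}) in a Borel way to $\MLdim^{-1}((0,\infty]) \subseteq \mathcal{F}(\ell^1)$; this forces the latter set, and therefore $\MLdim$ itself, to be non-Borel. Fix an integer $k \ge 2$ and a bijection of $\N$ with $\N \times \N$, so that the standard unit vectors of $\ell^1$ can be indexed $e_{(i,j)}$ for $i, j \in \N$. For $s = (s_0,\dots,s_{n-1}) \in \N^{<\omega}$ set $y_s = \sum_{i=0}^{n-1} 2^{-ki-1} e_{(i, s_i)} \in \ell^1$. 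An elementary computation shows that for distinct $s, t \in \N^{<\omega}$ whose longest common prefix has length $j$ one has $\|y_s - y_t\|_1 \asymp 2^{-kj}$; in particular every $y_s$ is isolated in $\{y_t : t \in \N^{<\omega}\}$ at distance at least $2^{-k|s|-1}$.

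To guarantee that even a single infinite branch of $T$ produces Cantor-set structure, I pass to the doubled tree $\tilde T$ on $\N \times \{0,1\} \cong \N$ defined by $((n_0,b_0),\dots,(n_{m-1},b_{m-1})) \in \tilde T$ iff $(n_0,\dots,n_{m-1}) \in T$; then for $T \in \trees(\N)$ set $F_T = \overline{\{y_s : s \in \tilde T\}} \in \mathcal{F}(\ell^1)$. The assignment $T \mapsto F_T$ is Borel, because for every open $U \subseteq \ell^1$,
\begin{equation*}
\{T : F_T \cap U \ne \emptyset\} = \bigcup_{s \,:\, y_s \in U} \{T : s \in \tilde T\},
\end{equation*}
a countable union of clopen subsets of $\trees(\N)$. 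If $T$ is ill-founded, I pick a branch $\xi = (n_0,n_1,\dots) \in [T]$; for each $b \in 2^\omega$ the series $\sum_{i=0}^\infty 2^{-ki-1} e_{(i,(n_i,b_i))}$ converges in $\ell^1$ to a point of $F_T$, and the metric estimate above shows the family of these limits is bi-Lipschitz equivalent to $2^\omega$ with the $2^{-k}$-adic metric, hence contains a $(k,2)$-regular set inside the complete space $F_T$. By Theorem~\ref{t:character} this yields $\MLdim F_T \ge 1/k > 0$. Conversely, if $T$ is well-founded then so is $\tilde T$, and I claim $F_T$ equals the already discrete set $\{y_s : s \in \tilde T\}$: a Cauchy sequence of pairwise distinct $y_{s_m}$ in this set would force longer and longer common prefixes of the $s_m$ (via $\|y_{s_m} - y_{s_{m'}}\|_1 \gtrsim 2^{-kj}$, where $j$ is the first disagreement), producing an infinite branch of $\tilde T$---a contradiction. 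Since every subset of a discrete metric space is again discrete and hence has lower dimension $0$, this yields $\MLdim F_T = 0$.

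The main obstacle is the well-founded direction: one must verify both that $\{y_s : s \in \tilde T\}$ is already closed in $\ell^1$ (so no Cantor-type limit set sneaks in through some hidden accumulation) and that discreteness of a metric space really forces every subset to have lower dimension $0$. The latter follows because for any isolated point $x \in E$ with isolation radius $R_0 > 0$ one has $N_r(B(x,R)) = 1$ for all $0 < r < R \le R_0$, which is incompatible with the estimate $N_r(B(x,R)) \ge C (R/r)^\alpha$ for any $\alpha > 0$. Once these two points are in place, Steps C and D combine to give $\{T : \MLdim F_T > 0\} = \illfounded$, and the Borelness of $T \mapsto F_T$ completes the reduction.
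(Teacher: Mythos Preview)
Your proof is correct and follows the same strategy as the paper: reduce $\illfounded$ to $\{F \in \mathcal{F}(\ell^1): \MLdim F > 0\}$ via a Borel map that attaches binary Cantor structure along each branch of $T$, then invoke Theorem~\ref{t:character} in the ill-founded case and discreteness (no accumulation points) in the well-founded case. The only cosmetic difference is that you first double the tree ($T \mapsto \tilde T$ on $\N\times\{0,1\}$) and send each node to a single point of $\ell^1$, whereas the paper keeps $T$ and sends each node $u$ to a finite set $\varphi(u)$ of $2^{|u|}$ points---two equivalent encodings of the same construction.
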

\begin{proof}
We denote the space of subtrees of $\N^{<\omega}$ by $\trees$, that forms a Polish space when equipped with the subspace topology coming from $2^{\N^{<\omega}}$, see \cite[4.32]{Ke}. Recall the definition of the Borel $\sigma$-algebra $\mathcal{B}$ on $\mathcal{F}(\ell^1)$ from \eqref{e:Effros}. We prove the theorem by constructing a Borel measurable mapping $\overline{\varphi} \colon \trees \to \mathcal{F}(\ell^1)$ (that is, $\overline{\varphi}^{-1}(B)$ will be a Borel subset of $\trees$ for each $B \in \mathcal{B}$) with the property
\begin{equation}\label{e:T in IF <-> dim_ML phi T > 0}
T \in \illfounded~\Leftrightarrow~\MLdim\overline{\varphi}(T) > 0,
\end{equation}
where $\illfounded = \{T \in \trees : T \text{ has an infinite branch}\}$. Since $\illfounded$ is not a Borel subset of $\trees$ by \cite[27.1]{Ke}, it follows from \eqref{e:T in IF <-> dim_ML phi T > 0} that 
\begin{equation*}  \{F \in \mathcal{F}(\ell^1) : \MLdim F > 0\} \not \in \mathcal{B},
  \end{equation*} thus $\MLdim$ is not Borel measurable. 
  
Thus it is enough to construct a Borel measurable map $\overline{\varphi} \colon \trees \to \mathcal{F}(\ell^1)$ satisfying \eqref{e:T in IF <-> dim_ML phi T > 0}. We do so by first constructing a map $\varphi \colon \N^{<\omega} \to \finite(\ell^1)$ assigning to each element $u \in \N^{<\omega}$ a finite subset $\varphi(u)$ of $\ell^1$. We then define $\overline{\varphi} \colon \trees \to \mathcal{F}(\ell^1)$ by
\begin{equation*} 
\overline{\varphi}(T) = \overline{\bigcup\{\varphi(u) : u \in T\}}.
\end{equation*} 
We first check that the map $\overline{\varphi}$ is necessarily Borel measurable. It is enough to check that for each open set $U \subset \ell^1$ the inverse image of the set in \eqref{e:Effros} is a Borel subset of $\trees$. Let us fix an open set $U \subset \ell^1$. Since the closure of a set $E$ intersects $U$ if and only if $E$ intersects $U$, we obtain that 
\begin{equation*}
\{T : \overline{\varphi}(T) \cap U \neq \emptyset\} = 
\{T : \exists u \in T, ~ \varphi(u) \cap U \neq \emptyset\} = \bigcup_{ u \in \N^{<\omega},\, \varphi(u) \cap U \neq \emptyset} \{T : u \in T\},
\end{equation*}
 which is clearly an open subset of $\trees$. 
  
It remains to define the map $\varphi \colon \N^{<\omega} \to \finite(\ell^1)$ so that \eqref{e:T in IF <-> dim_ML phi T > 0} is satisfied for the resulting $\overline{\varphi}$. First, for each $u \in \N^{<\omega}$ choose indices $n_u^0, n_u^1 \in \N$  such that $n_u^i \neq n_v^j$ if $(u,i) \neq (v,j)$. Next, we define $\varphi$ recursively. Let $\varphi(\emptyset) =\{\mathbf{0}\}$, where $\mathbf{0}\in \ell^1$ is the zero vector. If $\varphi(u)$ is defined for some $u \in \N^n$ and $v \in \N^{n+1}$ extends $u$, then let
 \begin{equation*} 
 \varphi(v) = \left\{x + 2^{-2n-1}\chi(n_v^i): x \in \varphi(u),\, i\in \{0,1\}\right\},
 \end{equation*} 
 where $\chi(n)$ is the unit vector whose only non-zero coordinate has index $n$. 

As the indices $n^i_u$ are distinct, if the number of coordinates of $u$ and $v$ satisfy $\length(u), \length(v)>k$ and $u(k) \neq v(k)$, then 
\begin{equation}\label{e:distance for phi}
 \|x-y\| \ge 2^{-2k} \text{ for all } x \in \varphi(u) \text{ and
  } y\in \varphi(v).
 \end{equation}
First, we claim that if $T$ does not have an infinite branch, that is, $T\not\in \illfounded$, then $\bigcup_{u \in T} \varphi(u)$ does not have limit points. Let $\{x_n\}_{n \in \omega}$ such that $x_n \in \varphi(u_n)$ for some $u_n \in T$ be any sequence of points from $\bigcup_{u \in T} \varphi(u)$. As $T$ does not have an infinite branch, there exists $k$ such that for each $N$ there exist $n, m \ge N$ such that $\length(u_n), \length(u_m) > k$ and $u_n(k) \neq u_m(k)$. Then Inequality~\eqref{e:distance for phi} yields that $\|x_n - x_m\| \ge 2^{-2k}$, hence $\{x_n\}_{n \in \omega}$ is not a Cauchy-sequence, showing that $\bigcup_{u \in T} \varphi(u)$ does not have limit points. Therefore its closure, $\overline{\varphi}(T)$ is countable. Any non-empty subset of a countable closed set has isolated points, therefore $\MLdim \overline{\varphi}(T) = 0$ in this case. 
  
Now suppose that $T \in \illfounded$, and $z \in \N^\omega$ is an infinite branch of $T$. We show that the closed set 
\begin{equation*} 
E=\overline{\{\varphi(z \restriction n) : n \in \omega\}}
\end{equation*}contains a $(2,2)$-regular family $\{y_s\}_{s\in 2^{<\omega}}$. Then applying Theorem~\ref{t:character} for the complete metric space $E$ implies that
\begin{equation*} 
\MLdim \overline{\varphi}(T)\geq \MLdim E>0.
\end{equation*} 
Let $y_{\emptyset}=\mathbf{0}$ and assume by induction that $y_s \in \varphi(z\restriction n)$ is defined for all $s \in 2^n$ for some $n\ge 0$. For $c\in \{0,1\}$ define $y_{s^\frown c}\in \varphi(z\restriction (n+1))$ as 
\begin{equation} \label{e:ys} y_{s^ \frown c} = y_s + 2^{-2n-1} \chi(n_{z \restriction (n+1)}^c).
\end{equation} 
It remains to check that $\{y_s\}_{s \in 2^{<\omega}}$ is indeed a $(2, 2)$-regular set. By \eqref{e:ys} we obtain $\|y_s-y_{s^\frown c}\| = 2^{-2n-1}$, so Definition~\ref{d:k,l regularity}~\eqref{i:kl1} holds.
If $s, t \in 2^n$ with $s \neq t$ then $s(k) \neq t(k)$ for some $k \le n-1$, hence \eqref{e:distance for phi} implies that $\|y_s-y_t\| \ge 2^{-2k}\geq 2^{-2n+2}$, so Definition~\ref{d:k,l regularity}~\eqref{i:kl2} is satisfied, too. This completes the proof of the theorem. 
\end{proof}

\subsection*{Acknowledgments}
We are indebted to Jonathan M.~Fraser for some illuminating conversations.


\begin{thebibliography}{9}
	


\bibitem{BG} P.~Bylund, J.~Gudayol, On the existence of doubling measures with certain regularity properties, \textit{Proc.~Amer.~Math.~Soc.}~{\bf 128} (2000), 3317--3327.

\bibitem{F} J.~M.~Fraser, \emph{Assouad Dimension and Fractal Geometry}, Cambridge University Press, 2020.

\bibitem{F2} J.~M.~Fraser, Assouad type dimensions and homogeneity of fractals, \textit{Trans.~Amer.~Math. Soc.}~{\bf 366} (2014), 6687--6733.

\bibitem{FY} J.~M.~Fraser, H.~Yu, New dimension spectra: finer information on scaling and homogeneity, \textit{Adv.~Math.}~{\bf 329} (2018), 273--328.

\bibitem{HT} K.~E.~Hare, S.~Troscheit, \textit{Lower Assouad dimension of measures and regularity}, \textit{Math.~Proc. Camb.~Phil.~Soc.}, to appear, available at:
https://arxiv.org/abs/1812.05573

\bibitem{JV} P.~J\"arvi, M.~Vuorinen,
Uniformly perfect sets and quasiregular mappings,
\textit{J.~London Math.~Soc.}~{\bf 54} (1996), 515--529.

\bibitem{KL} A.~K\"aenm\"aki, J.~Lehrb\"ack, Measures with predetermined regularity
and inhomogeneous self-similar sets, \textit{Ark.~Mat.}, {\bf 55} (2017), 165--184.

\bibitem{Ke} A.~S.~Kechris, \textit{Classical descriptive set theory}, Springer-Verlag, 1995.

\bibitem{L} D.~G.~Larman, A new theory of dimension, \textit{Proc.~London Math.~Soc.} (3) {\bf 17} (1967), 178--192.

\bibitem{S} T.~Sugawa, Uniform perfectness of the limit sets of Kleinian groups, \textit{Trans.~Amer.~Math.~Soc.} {\bf 353} (2001), 3603--3615.

\bibitem{XYY} F.~Xie, Y.~Yin, Y.~Sun, Uniform perfectness of self-affine sets, 
\textit{Proc.~Amer.~Math.~Soc.}~{\bf 131} (2003), 3053--3057.

\end{thebibliography}
\end{document}